\newcommand{\M}{\mathcal{M}}
\newcommand{\TM}{\mathcal{T}_Y\mathcal{M}}
\newcommand{\T}{\mathcal{T}}
\newcommand{\bigO}{\mathcal{O}}
\newcommand{\R}{\mathbb{R}}
\renewcommand{\S}{{\bf S}}
\renewcommand{\L}{{\bf L}}
\newcommand{\bfr}{{\bf r}}
\DeclareMathOperator{\U}{\textbf{U}}
\DeclareMathOperator{\V}{\textbf{V}}
\DeclareMathOperator{\Y}{\textbf{Y}}
\DeclareMathOperator{\C}{\textbf{C}}
\DeclareMathOperator{\Q}{\textbf{Q}}
\DeclareMathOperator{\I}{\textbf{I}}
\DeclareMathOperator{\K}{\textbf{K}}
\DeclareMathOperator{\X}{\textbf{X}}
\DeclareMathOperator{\A}{\textbf{A}}
\DeclareMathOperator{\W}{\textbf{W}}
\DeclareMathOperator{\ten}{Ten}
\DeclareMathOperator{\mat}{\bf{Mat}}
\DeclareMathOperator{\bigtimes}{{\hbox{\large\sf X}}}
\DeclareMathOperator{\rank}{rank}
\DeclareMathOperator*{\Bigotimes}{\text{\raisebox{0.25ex}{\scalebox{0.8}{$\bigotimes$}}}}
\newcommand*{\dt}[1]{%
  \accentset{\mbox{\large\bfseries .}}{#1}}
\newcommand{\dr}{\mathrm{d}}
\newtheorem{theorem}{Theorem}
\title{Time integration of \\ rank-constrained Tucker tensors\footnote{\textnormal{September 8, 2017}}}
\author{Christian Lubich\thanks{Mathematisches Institut, Universit{\"a}t T{\"u}bingen, Auf der Morgenstelle 10, D--72076 T{\"u}bingen, Germany (lubich@na.uni-tuebingen.de)} \and Bart Vandereycken\thanks{Section of Mathematics, University of Geneva, Rue du Lièvre 2-4, 1211 Geneva, Switzerland (Bart.Vandereycken@unige.ch)} \and Hanna Walach\thanks{Mathematisches Institut, Universit{\"a}t T{\"u}bingen, Auf der Morgenstelle 10, D--72076 T{\"u}bingen, Germany (walach@na.uni-tuebingen.de).}}
\date{\today} 
\begin{document}

\maketitle

\iftoggle{siam}{
	\slugger{sinum}{xxxx}{xx}{x}{x--x}
}{}

\begin{abstract}
	Dynamical low-rank approximation in the Tucker tensor format of given large time-dependent tensors and of tensor differential equations is the subject of this paper. In particular, a discrete time integration method for rank-constrained Tucker tensors is presented and analyzed. It extends the known projector-splitting integrator for dynamical low-rank approximation of matrices to Tucker tensors and is shown to inherit the same favorable properties. The integrator is based on iteratively applying the matrix projector-splitting integrator to tensor unfoldings but with inexact solution in a substep. It has the property that it reconstructs time-dependent Tucker tensors of the given rank exactly. The integrator is also shown to be robust to the presence of small singular values in the tensor unfoldings. 
	Numerical examples with problems from quantum dynamics and tensor optimization methods illustrate our theoretical results.
\end{abstract}

\iftoggle{siam}{
	\begin{keywords}
		Tucker tensor format, tensor differential equation, dynamical low-rank approximation, projector-splitting integrator 
	\end{keywords}
	\begin{AMS}
		15A03, 15A18, 15A69, 65L05, 65L20, 65L70
	\end{AMS}
	\pagestyle{myheadings}
	\thispagestyle{plain}
	\markboth{CH.~LUBICH, B.~VANDEREYCKEN AND H.~WALACH}{TIME INTEGRATION OF RANK-CONSTRAINED TUCKER TENSORS}
}{}

\section{Introduction}
In this paper we propose and study a discrete method for approximating time-dependent tensors $A(t)\in\mathbb{R}^{n_1\times\dots\times n_d}$ with $t_0\le t \le T$ by tensors of a prescribed (low) multilinear rank. The tensors $A(t)$ are either given explicitly or are the unknown solution to a tensor differential equation
\begin{align}\label{ode}
 \dt{A}(t) = F(t,A(t)), \qquad A(t_0) = A^0,
\end{align}
where $\dt A(t) = \dr A/\dr t$. The approximation follows the setting of the dynamical low-rank approximation of \cite{KL10}, which yields a differential equation for the approximation $Y(t)$ to $A(t)$ on the manifold $\M$ of tensors of multilinear rank $\bfr=(r_1,\dots,r_d)$. As is known from \cite{LMV00}, such tensors can be represented element-wise in the Tucker format (see, e.g., \cite{KB09}) as follows:
\begin{align}\label{eq:Tucker_elementwise}
 y_{k_1,\dots,k_d}(t) = \sum_{\ell_1=1}^{r_1}\cdots \sum_{\ell_d=1}^{r_d} c_{\ell_1,\dots,\ell_d}(t) \, u_{k_1,\ell_1}^{(1)}(t)
\cdots u_{k_d,\ell_d}^{(d)}(t),
\end{align}
with $k_i = 1, \ldots, n_i$ for all modes $i=1,\ldots, d$. Using the multilinear product $\bigtimes$ (see, e.g.,~\cite{KB09}), the relation~\eqref{eq:Tucker_elementwise} can be written more succinctly as
\[
Y(t) = C(t) \underset{i=1}{\overset{d}{\bigtimes}} \U_i(t),
\]
where $C(t) \in \R^{r_1 \times \dots \times r_d}$ is the time-dependent core tensor of full multilinear rank with entries $c_{\ell_1,\dots,\ell_d}(t)$, and $\U_i(t)$ is the mode-$i$ time-dependent basis matrix of size $n_i \times r_i$ with entries $u_{k_i,\ell_i}^{(i)}(t)$. 

The differential equation for $Y(t)\in\M$ is obtained by projecting $F(t,Y(t))$ (or $\dt A(t)$ in the case of a given explicit time-dependent tensor $A(t)$) onto the tangent space $\T_{Y(t)}\M$ of $\M$ at the current approximation $Y(t)\in\M$:
\begin{align}\label{dlra}
 \dt{Y}(t) = P(Y(t))F(t,Y(t)), \qquad Y(t_0) = Y^0 \in \M,
\end{align}
where $P(Y)\colon \R^{n_1\times\dots\times n_d} \to \T_{Y}\M$ is the orthogonal projection, which can be given explicitly as an alternating sum of subprojections \cite{KL10,L15}. 
The differential equation \eqref{dlra} needs to be solved numerically in virtually all applications. For example, in the context of molecular quantum dynamics, such an approach is taken in the multiconfiguration time-dependent Hartree method (MCTDH) \cite{MGW09}, where the multivariate wavefunction is approximated by a linear combination of products of univariate functions.

For the corresponding matrix problem (i.e., the particular case $d=2$), a projector-splitting integrator with remarkable properties has been proposed in~\cite{LO14}. In particular, contrary to standard numerical integrators such as explicit or implicit Runge--Kutta methods, that integrator is robust to the presence of small singular values of the current approximation matrix; see~\cite[Thm.~2.1]{KLW16} (which is restated as Theorem~\ref{thm:robustMatrix} below). Such a situation arises when the rank is chosen sufficiently large as to obtain an accurate approximation and the singular values have a decaying behaviour. This is, e.g., typical in applications where the matrix comes from discretising a smooth bivariate function or it is the solution of a parametric or stochastic PDE; see, e.g.,~\cite{Hackbusch:2012,Kressner:2015b}.

We outline the contributions and organization of the paper as follows:

The matrix projector-splitting integrator has been extended to Tucker tensors in \cite{L15} and to tensor trains (or matrix product states in the terminology of physics) in \cite{LOV15,HLOVV16}. In this paper we give a conceptually different derivation of an integrator for \eqref{dlra}, based on the idea of an inexact solution of substeps within the matrix projector-splitting integrator applied to matricizations of \eqref{dlra}. This derivation allows us to transfer the known favorable properties of the matrix integrator to the tensor case. We then show that the newly derived integrator is mathematically equivalent to the tensor projector-splitting integrator of \cite{L15}, whose key properties of exactness and robustness are thus proven in the present paper. We mention that this integrator has meanwhile proved its robustness and efficiency in a first MCTDH implementation \cite{KBL17}.

In Section 2, we briefly restate the matrix projector-splitting integrator and some of its properties. We then derive in Section 3 the Tucker tensor integrator in a recursive way from the matrix integrator. In Section 4, we show that the integrator reproduces the given matrix $A(t)$ if it is explicitly given and it is of rank $(r_1,\dots,r_d)$. This extends the exactness property of the integrator in the matrix case \cite{LO14}, which is fundamental for the error analysis of the matrix projector-splitting integrator in \cite{KLW16}. In Section 5, we extend the error analysis of \cite{KLW16} to the Tucker tensor case, which shows the robustness of the integrator in the presence of small singular values of its matricizations. In Section 6, we show that the integrator derived and studied here is mathematically equivalent to the projector-splitting integrator for Tucker tensors as proposed in \cite{L15}. Finally, in Section 7, we present numerical experiments that illustrate the behaviour of the integrator.

\section{The matrix projector-splitting integrator}
In this section, we briefly restate the projector-splitting integrator from~\cite{LO14} for the matrix case, i.e., for  $d=2$. Recall that our aim is to numerically integrate the initial value problem~\eqref{dlra} to obtain a low rank approximation of~\eqref{ode}. To this end, we will make use of the SVD-like representation 
\begin{align*}
 \Y(t) = \U(t)\S(t)\V(t)^T
\end{align*}
of the rank-$r$ approximation matrix $\Y(t) \in \M$, where $\U(t)$ and $\V(t)$ are basis matrices of size $n_1 \times r$ and $n_2 \times r$ for the first and second mode, respectively. The invertible matrix $\S(t) \in \R^{r \times r}$ has the same nonzero singular values as $\Y(t)$,  but unlike the SVD, $\S(t)$ is not assumed to be diagonal. 

The projector-splitting integrator updates the factors $\U,\S,\V$, starting from the initial value $\Y^0 = \U^0\S^0\V^{0,T}$. Let $F:\R \times \R^{n_1 \times n_2} \to \R^{n_1 \times n_2}$. Then one time step from $t_0$ to $t_1 = t_0+h$ proceeds as follows:
\begin{enumerate}
 \item \textbf{K-step}: Update $\U^0 \to \U^1$, $\S^0 \to \widehat{\S}^1$. \\
 Integrate to $t=t_1$ the differential equation 
 \begin{align}\label{eq:K-step}
  \dt{\K}(t) = F(t,\K(t)\V^{0,T})\V^0, \qquad \K(t_0) = \U^0\S^0
 \end{align}
and perform a QR factorization $\K(t_1) = \U^1\S^1$ to orthonormalise the columns of $\K(t_1)$. This yields $\U^1$ as the final approximation of the basis matrix $\U(t)$ at $t=t_1$, and the temporary update $\widehat{\S}^1$.
 \item \textbf{S-step}: Update $\widehat{\S}^1 \to \widetilde{\S}^0$. \\
 Integrate to $t=t_1$ the differential equation 
 \begin{align}\label{eq:S-step}
  \dt{\S}(t) = -\U^{1,T} F(t,\U^1\S(t)\V^{0,T})\V^0, \qquad \S(t_0) = \widehat{\S}^1
 \end{align}  
 This yields the temporary update $\widetilde{\S}^0 = \S(t_1)$.
 \item \textbf{L-step}: Update $\V^0 \to \V^1$, $\widetilde{\S}^0 \to {\S}^1$.  \\
 Integrate to $t=t_1$ the differential equation 
 \begin{align}\label{eq:L-step}
  \dt{\L}^T(t) = \U^{1,T} F(t,\U^1\L(t)^T), \qquad \L^T(t_0) = \widetilde{\S}^0\V^{0,T}
 \end{align}
and perform a QR factorization $\L(t_1) = \V^1\S^{1,T}$. This yields the final approximations $\V^1$ and $\S^{1}$.
 \end{enumerate}
Merging the computed factors results in the rank-$r$ approximation matrix 
\begin{align}
 \Y^1 = \U^1 \S^1 \V^{1,T}
\end{align}
after one time step. To continue in time, we take the factorized matrix $\Y^1 $ as initial value for the next time step and apply this  scheme again. This way we obtain a first-order splitting method for~\eqref{dlra}.

The matrix projector-splitting integrator has a remarkable exactness property. 

\begin{theorem}{\rm \cite[Thm.~4.1]{LO14}}\label{matex}
 Let $\A(t) \in \mathbb{R}^{n_1 \times n_2}$ with $\rank \A(t) \leq r$ for all $t$ and $\A(t_0) = \Y^0$. Further, let $\V(t_1)^T \V(t_0)$ be invertible. Then, the splitting integrator described above (with $F(t,\Y) = \dt{\A}(t)$) reproduces the exact solution: $\Y^1 = \A(t_1)$.
\end{theorem}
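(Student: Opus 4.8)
The plan is to exploit the special structure of the vector field: with $F(t,\Y)=\dt\A(t)$, the right-hand side of each of the three substep differential equations \eqref{eq:K-step}--\eqref{eq:L-step} no longer depends on the unknown, so every substep reduces to a pure quadrature that is integrated exactly by the fundamental theorem of calculus. Writing $\Delta\A := \A(t_1)-\A(t_0)$, the K-step gives $\K(t_1)=\U^0\S^0+\Delta\A\,\V^0$, the S-step gives $\widetilde{\S}^0=\widehat{\S}^1-\U^{1,T}\Delta\A\,\V^0$, and the L-step gives $\L(t_1)^T=\widetilde{\S}^0\V^{0,T}+\U^{1,T}\Delta\A$. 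The proof is then a matter of composing these three explicit expressions and identifying the outcome with $\A(t_1)$.

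First I would simplify the K-step. Since $\A(t_0)=\Y^0=\U^0\S^0\V^{0,T}$ and $\V^{0,T}\V^0=\I$, we have $\U^0\S^0=\A(t_0)\V^0$, so that
\begin{align*}
 \K(t_1)=\A(t_0)\V^0+\Delta\A\,\V^0=\A(t_1)\V^0.
\end{align*}
From the QR factorization $\K(t_1)=\U^1\widehat{\S}^1$ this yields $\widehat{\S}^1=\U^{1,T}\A(t_1)\V^0$. Substituting into the S- and L-step expressions, and using that the row space of $\A(t_0)$ lies in the range of $\V^0$ (so that $\A(t_0)\V^0\V^{0,T}=\A(t_0)$), the intermediate quantities telescope cleanly: $\widetilde{\S}^0=\U^{1,T}\A(t_0)\V^0$ and then $\L(t_1)^T=\U^{1,T}\A(t_0)+\U^{1,T}\Delta\A=\U^{1,T}\A(t_1)$. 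Merging the factors therefore gives
\begin{align*}
 \Y^1=\U^1\S^1\V^{1,T}=\U^1\L(t_1)^T=\U^1\U^{1,T}\A(t_1).
\end{align*}

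It then remains to show $\U^1\U^{1,T}\A(t_1)=\A(t_1)$, that is, that the orthogonal projection onto the range of $\U^1$ fixes $\A(t_1)$, equivalently that the column space of $\A(t_1)$ lies in the range of $\U^1$. This is the one place where the hypotheses are used, and I expect it to be the main obstacle. Since $\A(t_1)\V^0=\K(t_1)=\U^1\widehat{\S}^1$, the range of $\A(t_1)\V^0$ is contained in the range of $\U^1$, so it suffices to prove that the range of $\A(t_1)\V^0$ is the whole column space of $\A(t_1)$. Writing $\A(t_1)=\U(t_1)\S(t_1)\V(t_1)^T$ in SVD-like form with $\S(t_1)$ invertible, and recalling $\V^0=\V(t_0)$, we obtain $\A(t_1)\V^0=\U(t_1)\bigl(\S(t_1)\V(t_1)^T\V(t_0)\bigr)$, whose bracketed $r\times r$ factor is invertible precisely because $\S(t_1)$ is invertible and $\V(t_1)^T\V(t_0)$ is invertible by assumption. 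Hence the range of $\A(t_1)\V^0$ equals the range of $\U(t_1)$, which is the full column space of $\A(t_1)$. This gives $\U^1\U^{1,T}\A(t_1)=\A(t_1)$ and therefore $\Y^1=\A(t_1)$. The role of the invertibility assumption is exactly to guarantee that multiplying $\A(t_1)$ on the right by $\V(t_0)$ does not shrink its column space, which is what could otherwise fail.
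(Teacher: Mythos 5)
Your proposal is correct and is essentially the standard proof of this theorem from \cite{LO14} (the present paper only restates the result and gives no proof of its own): with $F(t,\Y)=\dt{\A}(t)$ the three substeps integrate exactly and telescope to $\Y^1=\U^1\U^{1,T}\A(t_1)$, and the invertibility of $\V(t_1)^T\V(t_0)$ is used, exactly as in \cite{LO14}, to show that the range of $\A(t_1)\V^0$, hence of $\U^1$, contains the range of $\A(t_1)$. One minor point: since the hypothesis is only $\rank \A(t)\le r$, you are not entitled to assume $\S(t_1)$ is invertible, but your final step survives without this assumption, because $\mathrm{range}\,\A(t_1)=\mathrm{range}\,\U(t_1)\S(t_1)$ (as $\V(t_1)^T$ has full row rank $r$) and $\A(t_1)\V^0=\U(t_1)\S(t_1)\bigl(\V(t_1)^T\V(t_0)\bigr)$ has the same range as $\U(t_1)\S(t_1)$ by the assumed invertibility of $\V(t_1)^T\V(t_0)$.
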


Moreover, the integrator is robust to the presence of small singular values in the solution or its approximation. 
 
 \begin{theorem}{\rm \cite[Thm.~2.1]{KLW16}}\label{thm:robustMatrix}
	 Let $\A(t)$ denote the solution of~\eqref{ode} on $[t_0,T]$ in case of $d=2$ and let $\M$ be the manifold of rank $r$ matrices in $\R^{n_1 \times n_2}$. Suppose the following assumptions are satisfied with $\|\cdot\|$ the Euclidean norm.
	  \begin{enumerate}[(a)]
	  \item\label{Lip-Mat} $F(t,\Y)$ is Lipschitz continuous and bounded for all $\Y, \widetilde{\Y} \in \R^{n_1 \times n_2}$:
	  \begin{align*}
	   \| F(t,\Y) - F(t,\widetilde{\Y}) \| \leq L \| \Y - \widetilde{\Y} \|, \qquad
	   \| F(t,\Y) \| \leq B.
	  \end{align*}
	  \item\label{MR-Mat} $F(t,\Y)$ can be decomposed into a tangential part and a small perturbation: 
	  \begin{align*}
	   &F(t,\Y) = M(t,\Y) + R(t,\Y), \\
	   &M(t,\Y) \in \T_{\Y}\M, \quad \|R(t,\Y)\| \leq \varepsilon,
	  \end{align*}
	  for all $\Y\in \M$ in a neighborhood of $\A(t)$ and for all $t \in [t_0,T]$. 
	 \item\label{A0-Mat} The initial value $\A(t_0)$ for \eqref{ode} has  rank $r$.
	  \end{enumerate}
 Then, the error of the splitting integrator described above after $n$ steps with step size $h>0$ satisfies for all $t_n = t_0 + nh \leq T$
\begin{align*}
 \| \Y_n - \A(t_n) \| \leq c_1 h + c_2 \varepsilon,
\end{align*}
where the constants $c_1, c_2$ only depend on $L,B,T-t_0$. In particular, the constants are independent of singular values of the exact or approximate solution matrix.
\end{theorem}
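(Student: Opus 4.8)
The plan is to pass from a local error bound to the global bound by a standard Lady Windermere argument built on the exact flow of \eqref{ode}, the whole point being to make both the local error and the stability constants independent of the singular values of the iterates. Write $\Phi^{t,s}$ for the exact flow of the full problem \eqref{ode} and $\Psi^h$ for the one-step map of the integrator described above; by assumption~(\ref{A0-Mat}) we may start from $\Y_0=\A(t_0)\in\M$, and by induction each iterate $\Y_k=\Psi^h(\Y_{k-1})$ lies on $\M$. Telescoping the identity $\Phi^{t_n,t_{k-1}}=\Phi^{t_n,t_k}\circ\Phi^{t_k,t_{k-1}}$ gives
\begin{align*}
 \Y_n - \A(t_n) = \sum_{k=1}^{n} \Big( \Phi^{t_n,t_k}\big(\Psi^h(\Y_{k-1})\big) - \Phi^{t_n,t_k}\big(\Phi^{t_k,t_{k-1}}(\Y_{k-1})\big) \Big) ,
\end{align*}
and since $\Phi^{t_n,t_k}$ is Lipschitz with constant $e^{L(t_n-t_k)}$ by assumption~(\ref{Lip-Mat}), it suffices to bound the local error $\|\Psi^h(\Y_{k-1})-\Phi^{t_k,t_{k-1}}(\Y_{k-1})\|$ by $c_0\,h(h+\varepsilon)$ with $c_0$ depending only on $L,B$. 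Summing the $n\le(T-t_0)/h$ contributions with these Lipschitz factors then yields $c_1 h + c_2\varepsilon$.

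The core of the proof is this local error bound with a constant that does not deteriorate as singular values tend to zero; a naive splitting-error estimate would instead introduce the curvature of $\M$, which scales like $1/\sigma_{\min}$, and this is precisely where the exactness property of Theorem~\ref{matex} is essential. Fix one step, abbreviate $\Y_0:=\Y_{k-1}\in\M$, and let $\widehat{\Y}(t):=\Phi^{t,t_0}(\Y_0)$ solve $\dt{\widehat{\Y}}=F(t,\widehat{\Y})$. I would compare $\Y_1:=\Psi^h(\Y_0)$ with $\widehat{\Y}(t_1)$ through the auxiliary value $\widetilde{\Y}_1$ obtained by running the integrator on the frozen, $\Y$-independent data $G(t):=F(t,\widehat{\Y}(t))=\dt{\widehat{\Y}}(t)$, splitting the local error into $\|\Y_1-\widetilde{\Y}_1\|$ and $\|\widetilde{\Y}_1-\widehat{\Y}(t_1)\|$.

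For the first piece, the two runs share $\U^0,\S^0,\V^0$ and differ only through the right-hand sides $F(t,\cdot)$ and $G(t)$ driving the K-, S- and L-substeps; along these substeps the arguments stay $\bigO(h)$-close to $\Y_0=\widehat{\Y}(t_0)$ while $\widehat{\Y}(t)-\widehat{\Y}(t_0)=\bigO(h)$, so the two data differ by $\bigO(Lh)$ in norm, and carrying this difference through the three substeps at the level of the reconstructed matrix $\U^1\S^1\V^{1,T}$ gives $\|\Y_1-\widetilde{\Y}_1\|\le c\,h^2$. For the second piece I would introduce the exact dynamical low-rank solution $\Y_*(t)$ of \eqref{dlra} with $\Y_*(t_0)=\Y_0$, which stays on $\M$: running the integrator on the frozen data $\dt{\Y}_*(t)=P(\Y_*(t))F(t,\Y_*(t))$ reproduces $\Y_*(t_1)$ exactly by Theorem~\ref{matex}. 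Assumption~(\ref{MR-Mat}) shows on the one hand that $\|\widehat{\Y}(t)-\Y_*(t)\|=\bigO(\varepsilon h)$ over the step (both start at $\Y_0$, and their derivatives differ only by the normal component of $F$ on $\M$, of norm $\le\varepsilon$, plus an $L$-Lipschitz term), and on the other hand that $G$ and $\dt{\Y}_*$ differ by $\bigO(\varepsilon)$; propagating these through the integrator yields $\|\widetilde{\Y}_1-\Y_*(t_1)\|=\bigO(\varepsilon h)$ and hence $\|\widetilde{\Y}_1-\widehat{\Y}(t_1)\|=\bigO(\varepsilon h)$. Adding the two pieces gives the local error $c_0\,h(h+\varepsilon)$.

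The main obstacle is to carry out both reductions with constants genuinely independent of the singular values. The danger lies in the QR-normalisations of the K- and L-steps and in the inverse of $\V(t_1)^T\V(t_0)$ underlying Theorem~\ref{matex}, each of which is sensitive to $\sigma_{\min}$; the heart of the argument is to show that these ill-conditioned objects enter only in combinations that cancel at the level of the reconstructed matrix $\Y_1$, so that no $1/\sigma_{\min}$ factor survives in $c_0$. This cancellation is the quantitative, perturbed counterpart of the exactness property — exactness must be upgraded from reproducing rank-$r$ paths verbatim to an $\bigO(\varepsilon h)$-stable statement for nearby non-exact data — and this is the genuine technical crux. Once the local bound $c_0\,h(h+\varepsilon)$ is established with $c_0=c_0(L,B)$, the telescoping identity together with the factors $e^{L(t_n-t_k)}$ and a discrete Gronwall summation over the $n\le(T-t_0)/h$ steps yields $\|\Y_n-\A(t_n)\|\le c_1 h + c_2\varepsilon$ with $c_1,c_2$ depending only on $L,B,T-t_0$.
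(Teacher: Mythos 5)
This theorem is not proved in the present paper at all: it is quoted verbatim from \cite[Thm.~2.1]{KLW16}, so your proposal has to be measured against the proof given there. Your outer scaffolding does match that proof: global error via a Lady Windermere argument with the $e^{L(t_n-t_k)}$ stability of the exact flow, a local bound of the form $c_0\,h(h+\varepsilon)$, comparison with an on-manifold path that the integrator reproduces exactly by Theorem~\ref{matex}, and assumption~(\ref{MR-Mat}) to control the $\varepsilon$-terms. But there is a genuine gap at the core: both halves of your local estimate rest on an unproven stability claim, namely that one step of the splitting integrator changes only by $\bigO(h\delta)$, with constants independent of the singular values, when the data driving the substeps are perturbed by $\delta$ in norm (you invoke this with $\delta=\bigO(h)$ to get $\|\Y_1-\widetilde{\Y}_1\|\le c\,h^2$, and with $\delta=\bigO(\varepsilon)$ to get $\|\widetilde{\Y}_1-\Y_*(t_1)\|=\bigO(\varepsilon h)$). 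You yourself flag this as ``the genuine technical crux,'' but flagging it is not proving it, and it is the entire mathematical content of the theorem; the remaining steps are standard splitting/Gronwall boilerplate that would equally well ``prove'' robustness for integrators that are in fact not robust to small singular values.

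To see concretely why the asserted cancellation cannot be obtained by mere reassembly of the factors: after the K-step, the orthonormal factor $\U^1$ from the QR factorization of $\K(t_1)$ enters the S- and L-steps \emph{on its own}, and an $\bigO(h\delta)$ perturbation of $\K(t_1)$ may perturb $\U^1$ by $\bigO\bigl(h\delta/\sigma_r(\widehat{\S}^1)\bigr)$. Writing one full step as $\Y^1=\Y^0+\int_{t_0}^{t_1}F_K\,\V^0\V^{0,T}\,dt+\U^1\U^{1,T}\bigl(\int_{t_0}^{t_1}F_L\,dt-\int_{t_0}^{t_1}F_S\,\V^0\V^{0,T}\,dt\bigr)$, with $F_K,F_S,F_L$ the right-hand sides along the respective substeps, shows that $\U^1\U^{1,T}$ multiplies a quantity of size $\bigO(Bh)$, not $\bigO(h\delta)$; hence the perturbation of this projector does not drop out ``at the level of the reconstructed matrix'' by itself. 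One must exploit the tangentiality structure, i.e.\ that $(\I-\U\U^{T})\,M(t,\Y)\,(\I-\V\V^{T})=0$ for $\Y=\U\S\V^{T}\in\M$ and $M(t,\Y)\in\T_{\Y}\M$, threaded through all three substeps along the moving comparison path --- this is precisely what the local-error lemmas of \cite{KLW16} accomplish and what is absent here. A second, smaller hole: invoking Theorem~\ref{matex} for the projected flow $\Y_*(t)$ of \eqref{dlra} requires $\V_*(t_1)^{T}\V_*(t_0)$ to be invertible, and since the derivative of $\V_*(t)$ scales like $1/\sigma_r$, this cannot be secured by taking $h$ small uniformly in the singular values, so the hypothesis needs a separate justification; the comparison path and the order of estimates in \cite{KLW16} are arranged exactly so that the argument does not founder on this point. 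As it stands, your text is an accurate roadmap of the known proof with its decisive lemma left as an assertion.
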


In general, the differential equations in the substeps~\eqref{eq:K-step}--\eqref{eq:L-step} have to be solved numerically, e.g., by a Runge--Kutta method. In the case when $F(t,\Y) = \dt{\A}(t)$ for explicitly given matrices $\A(t)$, the integrator works with the increment $\A(t_1) - \A(t_0)$ and so the substeps can be solved directly. If, however, we apply a numerical integrator, then instead of $\Y_n$, we compute a perturbed matrix $\widetilde{\Y}_n$. Assuming that the arising local errors in the substeps are bounded by $h\eta$, the error bound of \eqref{thm:robustMatrix} after $n$ time steps is given by 
\begin{equation}
\label{err-inexact}
\| \widetilde{\Y}_n - \A(t_n) \| \leq c_1 h + c_2 \varepsilon + c_3 \eta,
\end{equation} 
where $c_3$ also only depends on $L, B, T-t_0$; see Section 2.6.3 in \cite{KLW16}.

 These exactness and robustness properties will be extended to the Tucker integrator in Sections~\ref{exactness} and~\ref{sec:error_robustness}, respectively. But first, we present the integration scheme for tensors in the Tucker format.
 
\section{The nested Tucker integrator}\label{NTucker}

\subsection{Derivation of the integrator}\label{sec:NTucker-derivation}
To find a low-rank approximation for~\eqref{ode} in the case of Tucker tensors of general dimension $d$, we will now extend  the matrix projector-splitting integrator to tensors. To this end, we will need to transfer tensors into a matrix setting by considering their matricizations. In particular, we denote by
\begin{align*}
 \mat_i(X) = \X \in \R^{n_i \times n_{1} \cdots n_{i-1} n_{i+1} \cdots n_d}
\end{align*}
the $i$-mode matricization of a tensor $X \in \R^{n_1 \times \cdots \times n_d}$. It arranges the mode-$i$ fibres of $X$ to be the rows of the resulting matrix $\X$. The reversal of the $i$-mode matricization is called tensorization, which we denote as 
\begin{align*}
 \ten_i(\X) = X \in \R^{n_1 \times \cdots \times n_d}.
\end{align*}

We begin by matricizing the \emph{tensor} ODE~\eqref{ode} in mode 1:
\begin{align}\label{eq:NTucker_Mat_ODE_1}
 \mat_1\bigl(\dt{A}(t)\bigr) = \mat_1\bigl(F(t,A(t))\bigr), \qquad \mat_1\bigl(A(t_0)\bigr) = \mat_1\bigl(A^0\bigr).
\end{align}
This will allow us to formally apply the matrix projector-splitting integrator to this \emph{matrix} ODE where the initial value $\mat_1\bigl(A^0\bigr)$ is approximated by $\mat_1\bigl(Y^0\bigr)$ with $Y^0 \in \M$. Since $Y^0$ has multilinear rank $(r_1,\ldots,r_d)$, it satisfies the decomposition
\[
Y^0 = C^0 \underset{i=1}{\overset{d}{\bigtimes}} \U_i^0
\]
with $C^0 \in \R^{r_1 \times \dots \times r_d}$ and $\U_i^0 \in \R^{n_i \times  r_i}$.  Since we are dealing with the $1$st step of the algorithm, let us denote the initial value as $Y_1^0:=Y^0$ with core tensor as $C_1^0 := C^0$. By performing the QR decomposition 
$$\mat_1(C_1^0)^T = \Q_1^{0} \S_1^{0,T} \in \R^{r_2 \cdots r_d \times r_1}$$
and denoting 
\begin{equation}\label{eq:def_V_1}
	\V_1^{0,T} = \Q_1^{0,T} \Bigotimes_{i=2}^d \U_i^{0,T} \in \R^{r_1 \times n_2 \cdots  n_d},
\end{equation}	
we obtain the necessary SVD-like representation of the initial value of~\eqref{eq:NTucker_Mat_ODE_1} as
\begin{align*}
 \mat_1(Y_1^0) = \U_1^0 \mat_1(C_1^0) \underset{i=2}{\overset{d}{\Bigotimes}} \U_i^{0,T} = \U_1^0 \S_1^0 \V_1^{0,T}.
\end{align*}

Now, we are in the situation to apply the matrix projector-splitting integrator to~\eqref{eq:NTucker_Mat_ODE_1}:
\begin{enumerate}
 \item \textbf{K-step}: Update $\U_1^0 \to \U_1^1$, $\S_1^0 \to \widehat{\S}_1^1$.
 \item \textbf{S-step}: Update $\widehat{\S}_1^1 \to \widetilde{\S}_1^0$.
 \item \textbf{L-step}: Update $\V^0_1 \to \V^1_1$, $\widetilde{\S}^0_1 \to {\S}^1_1$ by  solving approximately
 \begin{equation}\label{odeL}
 \begin{aligned}
  \dt{\L}_1^T(t) &= \U_1^{1,T} \mat_1\left(F(t,\ten_1(\U_1^1\L_1^T(t)))\right), \\ \L_1^T(t_0) &= \L_1^{0,T} =  \widetilde{\S}_1^0\V_1^{0,T},
 \end{aligned}
 \end{equation}
 with $\L_1 \in \R^{r_1 \times n_2 \cdots n_d}$ and the QR factorization $\L_1(t_1) = \V^1_1\S_1^{1,T}$.
\end{enumerate}
The K- and S-steps can be calculated as in the matrix case, i.e., solving~\eqref{eq:K-step} and~\eqref{eq:S-step} but applied to~\eqref{eq:NTucker_Mat_ODE_1}. However, we do not solve the matrix differential equation in the L-step directly since it is defined for a prohibitively large $\L_1$. More importantly, it would also not lead to an approximation for $Y(t_1)$ of multilinear rank $(r_1, \ldots, r_d)$ since the exact L-step above only reduces the rank of the first mode. Instead, we perform a low-rank approximation  for~\eqref{odeL} by applying the matrix projector-splitting integrator again to a reshaped version of it.

Defining $Y_2(t) = \ten_1({\L_1^{T}}(t)) \in \R^{r_1 \times n_2 \times \cdots \times n_d}$, we first retensorize~\eqref{odeL} as
\begin{align}\label{odeY}
 \dt{Y}_2(t) = F(t,Y_2(t) \times_1 \U_1^1)\times_1 \U_1^{1,T}, \qquad Y_2(t_0) = \ten_1({\L_1^{0,T})}.
\end{align}
Observe that $Y_2(t)$ is usually of significantly smaller size than $Y(t)$ since typically $r_1 \ll n_1$. Next, we unfold~\eqref{odeY} in the $2${nd} mode. For simplicity of notation, we denote this $2$-mode unfolding of $Y_2$ by $\Y_{[2]} := \mat_2 (Y_{2}) \in \R^{n_2 \times r_1 n_{3} \cdots n_d}$. This gives the \emph{matrix} differential equation
\begin{align*}
\dt{\Y}_{[2]}(t) = \mat_2\Bigl(F(t,\ten_2(\Y_{[2]}(t)) \times_1 \U_1^1)\times_1 \U_1^{1,T}\Bigr)
\end{align*}
and, using~\eqref{odeL} and~\eqref{eq:def_V_1}, the initial value 
\[
\Y_{[2]}(t_0) = \mat_2\left(\ten_1({\L_1^{0,T})}\right)  = \mat_2\Bigl( \ten_1( \widetilde{\S}_1^0 \Q_1^{0,T}  \underset{i=2}{\overset{d}{\Bigotimes}} \U_i^{0,T}) \Bigr).
\]
Defining $C_2^0 = \ten_1\bigl(\widetilde{\S}_1^0 \Q_1^{0,T}\bigr) \in \R^{r_1 \times \cdots \times r_d}$ and $\C_{[2]}^0 = \mat_2(C_2^0)$, we also have
\begin{align*}
 \Y_{[2]}(t_0) = \mat_2 \bigl(C_2^0 \underset{i=2}{\overset{d}{\bigtimes}} \U_i^{0}  \bigr) = \U_2^0 \C^0_{[2]} \Bigl(\I_{r_1} \otimes \underset{i=3}{\overset{d}{\Bigotimes}} \U_i^{0,T} \Bigr).
\end{align*}
As before we have to determine the SVD-like representation of $\Y_{[2]}(t_0)$.  To this end, compute the QR factorization  $\C_{[2]}^{0,T} = \Q_2^{0}\S_2^{0,T}$. We then obtain the desired result as $\Y_{[2]}(t_0) = \U_2^0 \S_2^0 \V_2^{0,T}$ with $\V_2^{0,T} = \Q_2^{0,T} \Bigl(\I_{r_1} \otimes \Bigotimes_{i=3}^d \U_i^{0,T}\Bigr) \in \R^{r_2 \times r_1 n_3 \cdots n_d}$.

%
Now that we have set up the matrix problem again, we can apply the matrix projector-splitting integrator to $\dt{\Y}_{[2]}(t)$.
\begin{enumerate}
 \item \textbf{K-step}: Update $\U_2^0 \to \U_2^1$, $\S_2^0 \to \widehat{\S}_2^1$
 \item \textbf{S-step}: Update $\widehat{\S}_2^1 \to \widetilde{\S}_2^0$ 
 \item \textbf{L-step}: Update $\V^0_2 \to \V^1_2$, $\widetilde{\S}^0_2 \to {\S}^1_2$ by  solving approximately
 \begin{align*}
  \dt{\L}_2^T(t) &= \U_2^{1,T} \mat_2\Bigl(F(t,\ten_2(\U_2^1\L_2^T(t)) \times_1 \U_1^1)\times_1 \U_1^{1,T}\Bigr), \\
  \L_2^{T}(t_0) &= \L_2^{0,T} = \widetilde{\S}_2^0\V_2^{0,T},
 \end{align*}
 with $\L_2 \in \R^{r_2 \times r_1 n_3 \cdots n_d}$ and the QR factorization $\L_2(t_1) = \V^1_2\S_2^{1,T}$.
 \end{enumerate}
 
 
We continue recursively with solving the L-step approximately in each iteration step of the integrator. Generalising the pattern for modes 1 and 2 from above to general $i$, this requires us to find $Y_i(t) \in \R^{r_1 \times \cdots \times r_{i-1} \times n_{i} \times \cdots \times n_d}$ that satisfies the ODE
\begin{align}\label{odeYi}
 \dt{Y}_{i}(t) = F\Bigl(t,Y_{i}(t) \underset{k=1}{\overset{i-1}{\bigtimes}} \U_k^1 \Bigr)  \underset{k=1}{\overset{i-1}{\bigtimes}} \U_k^{1,T}, \qquad Y_{i}(t_0) = \ten_{i-1}\bigl(\L_{i-1}^{0,T}\bigr).
\end{align}
The K- and S-steps for mode $i-1$ calculate, in particular, the matrices $\widetilde{\S}_{i-1}^0$ and $\Q_{i-1}^{0}$. This implies that the initial guess in the above ODE is available as
\[
 \L_{i-1}^{0,T} = \widetilde{\S}_{i-1}^0\Q_{i-1}^{0,T} \Bigl(\Bigotimes_{k=1}^{i-2} \I_{r_k} \otimes \underset{k=i}{\overset{d}{\Bigotimes}} \U_k^{0,T}  \Bigr) \in \R^{n_{i-1} \times r_1 \cdots r_{i-2} n_{i} \cdots n_d}.
\]
To obtain a suitable matrix version of~\eqref{odeYi}, we unfold it in mode $i$ and define $\Y_{[i]} = \mat_i(Y_i) \in \R^{n_{i} \times r_1 \cdots r_{i-1} n_{i+1} \cdots n_d}$. This gives
\begin{equation}\label{Yi}
\begin{aligned}
 \dt{\Y}_{[i]}(t) &= \mat_i\Bigl(F\Bigl(t,\ten_i\bigl(\Y_{[i]}(t)\bigr) \underset{k=1}{\overset{i-1}{\bigtimes}} \U_k^1 \Bigr) \underset{k=1}{\overset{i-1}{\bigtimes}} \U_k^{1,T} \Bigr), \\
 \Y_{[i]}^0 
 &= \mat_i\Bigl(C_{i}^0 \underset{k=1}{\overset{i-1}{\bigtimes}} \I_{r_k} \underset{k=i}{\overset{d}{\bigtimes}} \U_{k}^0 \Bigr) = \U_{i}^0 \S_{i}^0 \V_{i}^{0,T}
\end{aligned}
\end{equation}
with $C_i^0 = \ten_{i-1}\bigl(\widetilde{\S}_{i-1}^0 \Q_{i-1}^{0,T}\bigr) \in \R^{r_1 \times \cdots \times r_d}$ and the QR decomposition
\[
 \mat_i(C_i^0)^T = \Q_i^{0} \S_i^{0,T} \in \R^{r_1 \cdots r_{i-1} r_{i+1} \cdots r_d \times r_i}.
\]
In addition, we have also used
\begin{equation}\label{eq:def_Vi}
	\V_{i}^{0,T} = \Q_{i}^{0,T}  \Bigl(\Bigotimes_{k=1}^{i-1} \I_{r_k} \otimes \Bigotimes_{k=i+1}^d \U_k^{0,T} \Bigr) \in \R^{r_i \times r_1 \cdots r_{i-1} n_{i+1} \cdots n_d}.
\end{equation}

In this way, we can indeed apply the K- and S-steps of the matrix projector- splitting to~\eqref{Yi}. The L-step is recursively solving
\begin{equation}\label{Li}
  \begin{aligned}
  \dot{\L}_i^T(t) &= \U_i^{1,T} \mat_i\Bigl(F\Bigl(t,\ten_i\bigl(\U_i^1\L_i^T(t)\bigr) \underset{k=1}{\overset{i-1}{\bigtimes}} \U_k^1 \Bigr)  \underset{k=1}{\overset{i-1}{\bigtimes}} \U_k^{1,T} \Bigr), \\
  \L_i^{T}(t_0) &= \L_i^{0,T} = \widetilde{\S}_i^0\V_i^{0,T}.
 \end{aligned}
 \end{equation}
with the scheme we just explained. The recursion ends at $i=d$ since then the L-step,
 \begin{equation}\label{LC}
  \begin{aligned}
   \dt{\L}_{d}^T &= \U_{d}^{1,T} \mat_d\Bigl(F\Bigl(t,\ten_d\bigl(\U_d^1 \L_d^T\bigr) \underset{i=1}{\overset{d-1}{\bigtimes}} \U_i^1 \Bigr)  \underset{i=1}{\overset{d-1}{\bigtimes}} \U_i^{1,T} \Bigr), \\
  \L_d^{T}(t_0) &= \L_d^{0,T} = \widetilde{\S}_d^0\V_d^{0,T},
 \end{aligned}
 \end{equation}
 can then be solved explicitly for $\L_d^T(t_1) \in \R^{r_d \times r_1 \cdots r_{d-1}}$. Observe that this means that $\L_d(t)$ actually corresponds to the update of the core tensor $C(t)$ itself. Hence, with such an explicit L step we have calculated the final update $\ten_d(\L_d(t_1)) =  C^1$. 
 
 The scheme from above operates on tensors $Y_i(t)$ that consecutively get smaller for $i=1,2,\ldots,d$. However, we can also interpret it as computing an approximation $Y^1$ for the Tucker tensor $Y(t_1) \in \R^{n_1 \times \cdots \times n_d}$ in~\eqref{dlra}. In particular, we have
 \[
  Y^1 = \ten_1(\U_1^1 \L_1^{T}(t_1)) = \ten_1\left(\U_1^1 \mat_1(Y_2(t_1))\right) = Y_2(t_1) \times_1 \U_1^1 \\
 \]
 with $\L_1(t_1)$ the approximate solution of~\eqref{odeL} obtained using $Y_2(t_1)$ in~\eqref{odeY}. In turn, $Y_2(t_1)$ is solved similarly using $Y_3(t)$:
 \[
  Y_2(t_1) = \ten_2(\U_2^1 \L_2^{T}(t_1)) =  Y_3(t_1) \times_2 \U_2^1.
 \]
 Hence, continuing recursively for all modes, we obtain
 \begin{align*}
  Y^1 &= Y_2(t_1) \times_1 \U_1^1 = Y_3(t_1) \times_2 \U_2^1 \times_1 \U_1^1  
  \\ &= \cdots = Y_{i+1}^1 \underset{k=1}{\overset{i}{\bigtimes}} \U_k^1 = C^1 \underset{k=1}{\overset{d}{\bigtimes}} \U_k^1.
 \end{align*}

\subsection{Practical algorithm}
As explained above, the nested Tucker integrator follows the scheme of recursively applying the matrix projector-splitting integrator with solving the first two steps, but performing a low-rank approximation for the third substep in each mode. The implementation of this integration scheme is straightforward and results in Alg.~\ref{alg:nestedKSL}. It simply updates the basis matrices $\U_i$ in the {K-step} and the auxiliary matrix $\S_i$ in the {S-step} for each mode. Quite remarkably, in the approximate L-step it suffices to only update the core tensor $C^0$. This also reduces the size of the matrix differential equation that has to be solved for the next mode.  For computational efficiency, we have written the operations using multilinear products. For example, line~\ref{alg:line:V_i}  is equivalent to~\eqref{eq:def_Vi}.


 The differential equations for $\K, \S, \L$ that need to be solved during the integration scheme, can be solved approximately, e.g., by a Runge--Kutta method. In the case, where $F(t,Y)$ is solution-independent and solely given by a tensor $A(t) \in \R^{n_1 \times \cdots \times n_d}$, those differential equations can be solved directly.  
 
 The integration scheme in Alg.~\ref{alg:nestedKSL} consists of recursively applying the matrix projector-splitting integrator. Since we do not solve the full matrix scheme, but rather the first two steps in order to update $\U_i$ and $\S_i$ for all modes $i = 1, \dots, d$, it is a  nested matrix projector-splitting integrator for Tucker tensors, or in short, the nested Tucker integrator.

\hfill \break

\begin{algorithm}[H]\label{alg:nestedKSL}
\caption{One time step of the nested Tucker integrator}
    \label{algntucker}
        \KwData{Tucker tensor $Y^0 = C^0 \bigtimes_{i=1}^d \U_i^0$, $F(t,Y)$, $t_0$, $t_1$}
        \KwResult{Tucker tensor $Y^1 = C^1 \bigtimes_{i=1}^d \U_i^1$}
    \Begin{
           \For {$i=1$ \KwTo $d$}
           {
             compute QR factorization $\mat_i(C^0)^T = \Q_i^0 \S_i^{0,T}$ 
            
            set $\V_i^{0,T} = \mat_i \Bigl(\ten_i(\Q_i^{0,T}) \underset{l=i+1}{\overset{d}{\bigtimes}} \U_l^{0,T}\Bigr)$  \label{alg:line:V_i} 
            
             set $\K_i^0 = \U_i^0 \S_i^0$
             
             set $\Y_{[i]}^+(t) = \K_i(t)\V_i^{0,T}$
            
             solve $\dt{\K}_i(t) = \mat_i\Bigl(F\bigl(t, \ten_i(\Y_{[i]}^+)\underset{k=1}{\overset{i-1}{\bigtimes}} \U_k^1\bigr) \underset{k=1}{\overset{i-1}{\bigtimes}} \U_k^{1,T}\Bigr) \V_i^0$, \\ \linebreak
             \ with initial value $\K_i(t_0) = \K_i^0$ and return $\K_i^1 = \K_i(t_1)$
            
             compute QR factorization $\K_i^1 = \U_i^1 \widehat{\S}_i^1$ \label{alg-line:QR_U1}

             set $\Y_{[i]}^-(t) = \U_i^1\S_i(t)\V_i^{0,T}$
            
             solve $\dt{\S}_i(t) = -\U_i^{1,T} \mat_i\Bigl(F\bigl(t, \ten_i(\Y_{[i]}^-)\underset{k=1}{\overset{i-1}{\bigtimes}} \U_k^1\bigr) \underset{k=1}{\overset{i-1}{\bigtimes}} \U_k^{1,T}\Bigr) \V_i^0$, \\ \linebreak
             \ with initial value $\S_i(t_0) = \widehat{\S}_i^1$ and return $\widetilde \S_i^0 = \S_i(t_1)$
            
             set $C^0 = \ten_i(\widetilde \S_i^0 \Q_i^{0,T})$
           }
        
           set $\L^{0,T}= \mat_d(C^0)$ 
        
           solve $\dt{\L}^T(t) = \U_d^{1,T} \mat_d\Bigl(F\bigl(t, \ten_d(\U_d^1\L(t)^T)\underset{k=1}{\overset{i-1}{\bigtimes}} \U_k^1\bigr)
           \underset{k=1}{\overset{d-1}{\bigtimes}} \U_k^{1,T}\Bigr)$, \\ \linebreak
           with initial value $\L^T(t_0) = \L^{0,T}$ and return $\L^{1,T} = \L^T(t_1)$
           
           set $C^1 = \ten_d(\L^{1,T})$
        
           set $Y^1 = C^1 \underset{i=1}{\overset{d}{\bigtimes}} \U_i^1$ 
         }
\end{algorithm}
\vspace{10pt}

\section{An exactness property of the integrator}\label{exactness}

Let $\M \subset \R^{n_1 \times \cdots \times n_d}$ be the manifold of tensors with multilinear rank $(r_1, \dots, r_d)$. Suppose that $A(t) \in \R^{n_1 \times \cdots \times n_d}$ is given explicitly, hence, we formally have $F(t,Y) = \dt{A}(t)$ in~\eqref{ode} and~\eqref{dlra}. In addition, we assume that $A(t) \in \M$ for $t_0 \leq t \leq T$. Our aim in this section is to prove that Alg.~\ref{algntucker}, the nested Tucker integrator, is in that case exact. In other words, Alg.~\ref{algntucker} solves the initial value problem~\eqref{dlra} exactly even though it is a discrete time stepping method. As mentioned in Theorem~\ref{matex}, the projector-splitting integrator for matrices already has this property but it does not hold for more standard integrators on $\M$, like the projected Runge--Kutta methods in~\cite{Kieri_V:2017}.


Since $A(t) \in \M$ for all $t$, we can write its $i$-mode matricization as
\begin{align} \label{USW}
 \mat_i(A(t)) = \U_i(t) \S_i(t) \W_i(t)^T,
\end{align}
where $\U_i(t) \in \R^{n_i \times r_i}$ and $\W_i(t) \in \R^{n_1 \cdots n_{i-1} \cdot n_{i+1} \cdots n_d \times r_i}$ have orthonormal columns and $\S_i(t) \in \R^{r_i \times r_i}$ for all $i = 1, \dots, d$. With this SVD-like representation we can state and prove the following exactness result. 

\begin{theorem}
 Let $A(t)$ be of multilinear rank $(r_1, \dots, r_d)$ for all $t \in (t_0,t_1)$ and let $Y(t_0) = A(t_0)$. Further, let $\W_i(t_1)^T \W_i(t_0)$ be invertible for all $i = 2, \dots, d$. Then, Algorithm \ref{algntucker} for $F(t,Y) = \dt{A}(t)$ reproduces the exact solution: $Y^1 = A(t_1)$.
\end{theorem}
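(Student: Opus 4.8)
The plan is to prove exactness by \emph{recursion over the modes}, reducing each mode's update to the matrix exactness result of Theorem~\ref{matex}. The starting observation is that when $F(t,Y)=\dt{A}(t)$, the right-hand sides of all K-, S-, and L-substeps depend only on the \emph{increment} of a contracted version of $A$ between $t_0$ and $t_1$; hence every substep ODE can be integrated exactly, and I may treat the integrator as solving each substep without error. This is precisely the setting in which Theorem~\ref{matex} applies at the matrix level.

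Next I would introduce the reduced tensors $A_i(t) := A(t)\bigtimes_{k=1}^{i-1}\U_k^{1,T}$, with $A_1=A$, and match them to the nested scheme \eqref{odeYi}. Since $F=\dt{A}$, the reduced ODE \eqref{odeYi} reads $\dt{Y}_i(t)=\dt{A}(t)\bigtimes_{k=1}^{i-1}\U_k^{1,T}=\dt{A}_i(t)$, so its exact flow is $Y_i(t)=Y_i(t_0)+\bigl(A_i(t)-A_i(t_0)\bigr)$. I would then verify inductively that the initial value produced by the preceding K-/S-steps is $Y_i(t_0)=A_i(t_0)$: using $\widetilde{\S}_{i-1}^0=\U_{i-1}^{1,T}\mat_{i-1}(A_{i-1}(t_0))\V_{i-1}^0$ from the S-step together with $\L_{i-1}^{0,T}=\widetilde{\S}_{i-1}^0\V_{i-1}^{0,T}$, the row-space identity $\mat_{i-1}(A_{i-1}(t_0))\,\V_{i-1}^0\V_{i-1}^{0,T}=\mat_{i-1}(A_{i-1}(t_0))$ collapses the initial value to $\ten_{i-1}\bigl(\U_{i-1}^{1,T}\mat_{i-1}(A_{i-1}(t_0))\bigr)=A_i(t_0)$. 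Consequently the exact reduced flow satisfies $Y_i(t_1)=A_i(t_1)=A(t_1)\bigtimes_{k=1}^{i-1}\U_k^{1,T}$.

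With this identification, the mode-$i$ block of the algorithm is exactly the matrix integrator of Section~2 applied to the matricized reduced ODE \eqref{Yi}, whose solution $\mat_i(A_i(t))$ has rank at most $r_i$. Here I would invoke Theorem~\ref{matex}: the K- and S-steps reproduce the exact column basis, i.e.\ $\mathrm{range}(\U_i^1)=\mathrm{range}(\U_i(t_1))$, where the invertibility of the right factor $\V_i(t_1)^T\V_i(t_0)$ needed by that theorem is supplied by the assumed invertibility of $\W_i(t_1)^T\W_i(t_0)$, after noting that $\mathrm{range}(\V_i^0)$ is the range of $\W_i(t_0)$ transported through the orthonormal contractions $\U_k^{1,T}$. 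Assembling the recursion then telescopes to $Y^1=Y_2(t_1)\times_1\U_1^1=\cdots=C^1\bigtimes_{k=1}^d\U_k^1$ with core $C^1=A(t_1)\bigtimes_{k=1}^d\U_k^{1,T}$, so that $Y^1=A(t_1)\bigtimes_{k=1}^d\bigl(\U_k^1\U_k^{1,T}\bigr)$; the column-space containment $\mathrm{range}(\U_k(t_1))\subseteq\mathrm{range}(\U_k^1)$ at every mode then forces $Y^1=A(t_1)$.

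The main obstacle I expect is the third step: establishing rigorously that each reduced matricization $\mat_i(A_i(t))$ still has mode-$i$ rank exactly $r_i$ and that the original invertibility hypothesis on $\W_i(t_1)^T\W_i(t_0)$ genuinely transfers to the non-degeneracy of the right factor $\V_i(t_0)$ of the reduced subproblem. This requires showing that contracting the leading modes by the \emph{already correct} orthonormal bases $\U_1^1,\dots,\U_{i-1}^1$ neither lowers the mode-$i$ rank nor destroys the alignment between the row spaces of $\mat_i(A_i(t_0))$ and $\mat_i(A_i(t_1))$, so that Theorem~\ref{matex} is genuinely applicable at each recursion level. The careful bookkeeping of the nested initial values $\L_{i-1}^{0,T}$ and of how the successively updated core $C^0$ stores $A(t_0)\bigtimes_{k=1}^{i-1}\U_k^{1,T}$ is the technical heart of the argument; once it is in place, the conclusion follows from the telescoping assembly above.
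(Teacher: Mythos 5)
Your proposal is correct and rests on the same three pillars as the paper's proof: (a) for $F(t,Y)=\dt{A}(t)$ every substep integrates an increment and is hence exact; (b) the K/S cancellation $\widetilde{\S}_{i-1}^0=\U_{i-1}^{1,T}\U_{i-1}^0\S_{i-1}^0$, which yields the initial-value identity \eqref{initYi} (your $Y_i(t_0)=A_i(t_0)$); and (c) the transfer of the hypothesis via $\V_i(t_1)^T\V_i(t_0)=\W_i(t_1)^T\W_i(t_0)$ so that the matrix exactness result is applicable mode by mode. Where you genuinely deviate is the final assembly: the paper runs a \emph{downward} induction $i=d,\dots,1$, showing that the recursively computed L-step output equals the exact solution $\L_i^T(t_1)=\U_i^{1,T}\mat_i\bigl(A(t_1)\bigtimes_{k=1}^{i-1}\U_k^{1,T}\bigr)$, so that Theorem~\ref{matex} applies verbatim at every mode and \eqref{solYi} at $i=1$ gives $Y^1=A(t_1)$; you instead compute the final core directly, $C^1=A(t_1)\bigtimes_{k=1}^{d}\U_k^{1,T}$, and close with the projection argument $Y^1=A(t_1)\bigtimes_{k=1}^{d}\bigl(\U_k^1\U_k^{1,T}\bigr)=A(t_1)$ using $\operatorname{range}\bigl(\U_k(t_1)\bigr)\subseteq\operatorname{range}\bigl(\U_k^1\bigr)$. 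Your finish is somewhat more elementary in that it unwinds the mechanism behind the matrix exactness proof of \cite{LO14} instead of citing the theorem at every recursion level; but be aware that Theorem~\ref{matex} as stated asserts $\Y^1=\A(t_1)$, not that the K-step recovers the exact range, so you must derive the range claim from $\K_i(t_1)=\mat_i\bigl(A_i(t_1)\bigr)\V_i^0$ (which your initial-value lemma provides) together with the invertibility of $\V_i(t_1)^T\V_i(t_0)$, and this requires an additional \emph{upward} induction over $k<i$ establishing $\operatorname{range}(\U_k^1)=\operatorname{range}(\U_k(t_1))$ before you can argue that the contractions preserve the mode-$i$ rank. That last point is precisely the ``main obstacle'' you flag, and the paper dispatches it through \eqref{def_V}: since the $\U_k^1$ have orthonormal columns, $\V_i(t_1)^T\V_i(t_0)=\W_i(t_1)^T\W_i(t_0)$, and the assumed invertibility of the latter simultaneously forces $\V_i(t_0)$ and $\V_i(t_1)$ to have full column rank, hence the reduced unfoldings have rank exactly $r_i$ at both endpoints; so your plan is completable with essentially a one-line Kronecker computation. (Like the paper's own argument, your mode-$1$ range claim uses the overlap condition for $i=1$, which the theorem statement formally lists only for $i=2,\dots,d$.)
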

\begin{proof}
Recall that the nested Tucker integrator in Alg.~\ref{algntucker} is designed to approximately solve the initial value subproblems (see~\eqref{Yi})
\begin{align}\label{expl}
  \dt{\Y}_{[i]}(t) = \mat_i\Bigl(\dt{A}(t) \underset{k=1}{\overset{i-1}{\bigtimes}} \U_k^{1,T} \Bigr), \qquad \Y_{[i]}(t_0) = \Y_{[i]}^0 = \U_{i}^0 \S_{i}^0 \V_{i}^{0,T}, 
\end{align}
where $\ten_i(\V_{i}^{0,T}) = \ten_i(\Q_{i}^{0,T}) \bigtimes_{l=i+1}^d \U_l^0 \in \R^{r_1 \times \cdots \times r_i \times n_{i+1} \times \cdots \times n_d}$ for each mode $i = 1, \dots, d$. In addition, the tensorized result $Y_i^1 = \ten_i(\Y_{[i]}(t_1))$ after one time step is in the low-rank manifold $\mathcal{M}_i := \{Y_i \in \R^{r_1 \times \cdots \times r_{i-1} \times n_i \times \cdots \times n_d}: \rank \mat_i(Y_i) = r_i \}$.

In the first part of the proof, we show that the initial value for \eqref{expl} can be written in terms of $A(t_0)$: 
\begin{align}\label{initYi}
 \Y_{[i]}(t_0) = \mat_i\Bigl(A(t_0) \underset{k=1}{\overset{i-1}{\bigtimes}} \U_k^{1,T}\Bigr).
\end{align}
This ensures that $\Y_{[i]}^0$ has rank $r_i$. With this initial value, the exact solution of \eqref{expl} has rank $r_i$ as well, since $A(t)$ is assumed to have multilinear rank $(r_1, \dots, r_d)$:
\begin{align*}
 \Y_{[i]}(t) &= \Y_{[i]}(t_0) + \underset{t_0}{\overset{t}{\int}} \dt{\Y}_{[i]}(s) ds \\
 &= \mat_i\Bigl(A(t_0) \underset{k=1}{\overset{i-1}{\bigtimes}} \U_k^{1,T} \Bigr) + \mat_i\Bigl(\bigl(A(t) - A(t_0)\bigr) \underset{k=1}{\overset{i-1}{\bigtimes}} \U_k^{1,T}\Bigr) \\
 &= \mat_i\Bigl(A(t) \underset{k=1}{\overset{i-1}{\bigtimes}} \U_k^{1,T} \Bigr) \\
 &= \U_i(t) \S_i(t) \V_i(t)^T, 
\end{align*}
where we use the decomposition \eqref{USW} and set 
\begin{align}
\V_i(t) &= (\U_1^{1} \otimes \cdots \otimes \U_{i-1}^{1} \otimes \I_i \otimes \cdots \otimes \I_d) \, \W_i(t)  \label{def_V}
\end{align}

To show \eqref{initYi}, we use an induction argument. With the abbreviation $\Delta A = A(t_1) - A(t_0)$ we have
\begin{align*}
  \U_{i-1}^1\L_{i-1}^{0,T} &= \U_{i-1}^1 \widehat{\S}_{i-1}^1 \V_{i-1}^{0,T} - \U_{i-1}^1 \U_{i-1}^{1,T} \mat_{i-1}(\Delta A) \V_{i-1}^0 \V_{i-1}^{0,T} \\
  &= \U_{i-1}^0 \S_{i-1}^0 \V_{i-1}^{0,T} + \mat_{i-1}(\Delta A) \V_{i-1}^0 \V_{i-1}^{0,T} \\
  &\qquad - \U_{i-1}^1 \U_{i-1}^{1,T} \mat_{i-1}(\Delta A) \V_{i-1}^0 \V_{i-1}^{0,T} \\
  &= \mat_{i-1} \Bigl(A(t_0) \underset{k=1}{\overset{i-2}{\bigtimes}} \U_{k}^{1,T} \Bigr) \\
  &\qquad + \bigl( \I - \U_{i-1}^1 \U_{i-1}^{1,T} \bigr) \bigl(\mat_{i-1}(\Delta A) \V_{i-1}^0 \V_{i-1}^{0,T}\bigr),
 \end{align*}
where the last equality holds by the induction hypothesis. It follows that 
$$ \L_{i-1}^{0,T} = \U_{i-1}^{1,T} \mat_{i-1} \Bigl(A(t_0) \underset{k=1}{\overset{i-2}{\bigtimes}} \U_{k}^{1,T} \Bigr) = \mat_{i-1}\bigl(A(t_0) \underset{k=1}{\overset{i-1}{\bigtimes}} \U_k^{1,T} \bigr).$$ Retensorizing and taking the $i$-mode unfolding yields $$\Y_{[i]}(t_0) = \mat_i(\ten_{i-1}(\L_{i-1}^{0,T})),$$ which becomes \eqref{initYi} with the above formula for $\L_{i-1}^{0,T}$.

To show the exactness of Alg.~\ref{algntucker}, we first consider the $d$-mode unfolded subproblem. Here, the last substep of the nested Tucker integrator is the same as applying the matrix projector-splitting integrator to \eqref{expl} with initial value \eqref{initYi} for $i=d$. Since the updated basis matrices $\U_k^1$ for $k = 1, \dots i-1$ are not time-dependent from the $i$-th integration step onwards, we observe by means of~\eqref{def_V}, that 
\begin{align*}
\V_i(t_1)^T \V_i(t_0) 
&= \W_i^T(t_0) \, (\U_1^{1,T} \U_1^{1} \otimes \cdots \otimes \U_{i-1}^{1,T} \U_{i-1}^{1} \otimes \I_i \otimes \cdots \otimes \I_d) \, \W_i(t_0) \\
&= \W_i(t_1)^T \W_i(t_0),
\end{align*}
for all $i = 1, \dots, d$. Additionally, by assumption $\W_i(t_1)^T\W_i(t_0)$ is non-singular and so we conclude by 
Theorem~\ref{matex} that the integrator is exact for the $d$-mode setting after one time step from $t_0$ to $t_1$: 
\begin{align*}
 \Y_{[d]}^1 = \mat_d\Bigl(A(t_1) \underset{k=1}{\overset{d-1}{\bigtimes}} \U_k^{1,T}\Bigr).
\end{align*}
We now show by induction for $i = d, \dots, 1$, that 
\begin{align}\label{solYi}
 \Y_{[i]}^1 = \mat_i\Bigl(A(t_1) \underset{k=1}{\overset{i-1}{\bigtimes}} \U_k^{1,T}\Bigr).
\end{align}
Suppose this has been shown for $\Y_{[d]}^1, \dots, \Y_{[i+1]}^1$. The substep of Alg.~\ref{algntucker} in the $i$-mode unfolding solves exactly the differential equations 
 \begin{align*}
  \dt{\K}_i(t) &= \mat_i\Bigl(\dt{A}(t) \underset{k=1}{\overset{i-1}{\bigtimes}} \U_k^{1,T}\Bigr) \V_i^0, \qquad \K_i(t_0) = \Y_{[i]}^0 \V_i^0 \\
  \dt{\S}_i(t) &= \U_i^{1,T} \mat_i\Bigl(\dt{A}(t) \underset{k=1}{\overset{i-1}{\bigtimes}} \U_k^{1,T}\Bigr) \V_i^0, \qquad \S_i(t_0) = \U_i^{1,T}\Y_{[i]}^0\V_i^0,
 \end{align*}
and approximately the differential equation 
\begin{align*}
 \dt{\L}_i^T(t) &= \U_i^{1,T} \mat_i\Bigl(\dt{A}(t) \underset{k=1}{\overset{i-1}{\bigtimes}} \U_k^{1,T}\Bigr), \qquad \L_i^T(t_0) = \U_i^{1,T} \Y_{[i]}^0.
\end{align*}
Since $\Y_{[i+1]}^1$ is the exact solution for the $(i+1)$-mode setting, we conclude by induction hypothesis 
\begin{align*}
 \L_i^{1,T} &= \mat_i\bigl(\ten_{i+1}(\Y_{[i+1]}^1)\bigr) = \mat_i\Bigl(A(t_1) \underset{k=1}{\overset{i}{\bigtimes}} \U_k^{1,T} \Bigr) \\
 &= \U_i^{1,T} \mat_i\Bigl(A(t_1) \underset{k=1}{\overset{i-1}{\bigtimes}} \U_k^{1,T} \Bigr) = \L_i^T(t_1).
\end{align*}
Hence also the differential equation in the third substep of the $i$-mode unfolded subproblem is solved exactly. By the exactness result for the matrix projector-splitting integrator, Alg.~\ref{algntucker} solves \eqref{expl} with initial value \eqref{initYi} exactly, so that \eqref{solYi} is satisfied. Hence, \eqref{solYi} holds also for $i=1$, which yields $Y^1 = A(t_1)$.
\end{proof}

\section{Error bounds for the nested Tucker integrator}\label{sec:error_robustness}

We now show that, just like in Thm.~\ref{thm:robustMatrix} for the matrix case, the nested Tucker integrator is robust to small singular values. Since this integrator is based on recursively applying the matrix projector-splitting integrator, the plan is to analyse these recursive steps from the matrix perspective so that we can  apply Thm.~\ref{thm:robustMatrix}. To this end, we first need to generalise the assumptions of Thm.~\ref{thm:robustMatrix}.

 Let $A(t)$ be the solution of~\eqref{ode} on $[t_0,T]$. We denote again by $\M$ the manifold of tensors of multilinear rank $(r_1, \dots, r_d)$. Let $$\M_i = \{Y \in \R^{n_1 \times \cdots \times n_d} \colon \rank(\mat_i(Y))=r_i\},$$ so that $\M = \M_1 \cap \dots \cap \M_d$. We assume that for each $i=1, \dots, d$, the $i$-mode unfolding of~\eqref{ode} satisfies the following conditions.
 \begin{itemize}
 \item $F(t,Y)$ is Lipschitz continuous and bounded for all $Y, \widetilde{Y} \in \R^{n_1 \times \cdots \times n_d}$:
 \begin{align}\label{Lip}
  \| F(t,Y) - F(t,\widetilde{Y}) \| \leq L \| Y - \widetilde{Y} \|, \qquad \| F(t,Y) \| \leq B.
 \end{align}
 \item $F(t,Y)$ can be decomposed into a tangential part and a small perturbation: 
 \begin{align}\label{MR}
 \begin{split}
  &F(t,Y) = M_i(t,Y) + R_i(t,Y), \\
  &M_i(t,Y) \in \T_Y\M_i, \quad \|R_i(t,Y)\| \leq \varepsilon,
 \end{split}
 \end{align}
 for all $Y\in \M_i$ in a neighborhood of $A(t)$ and for all $t \in [t_0,T]$. 
\item The initial value $A(t_0)$ for \eqref{ode} has multilinear rank $(r_1, \dots, r_d)$.
 \end{itemize}
 
 The second condition~\eqref{MR} is formulated in terms of $\M_i$ that are essentially fixed matrix manifolds. Since we are solving~\eqref{dlra} on a fixed rank Tucker manifold $\M$, it seems more natural to impose that $F(t,Y)$ is close to the tangent space of $\M$, that is,
 \begin{equation}\label{MR_M}
  \| F(t,Y) - P(Y) F(t,Y) \| \leq \varepsilon.
 \end{equation}
 However, since $\M = \M_1 \cap \dots \cap \M_d$, by definition of a tangent space we get $T_Y\M \subseteq T_Y \M_1 \cap \dots \cap T_Y \M_d$ for $Y \in \M$. Hence, $P(Y) F(t,Y) \in T_Y \M_i$ for all $i=1,\ldots,d$ and so~\eqref{MR_M} actually implies~\eqref{MR} for all $Y \in \M$.
 
 \begin{theorem}
 Under the above assumptions, the error of the nested Tucker integrator after $n$ steps with step size $h>0$ satisfies for all $t_n = t_0 + nh \leq T$: 
\begin{align*}
 \| Y_n - A(t_n) \| \leq c_1 h + c_2 \varepsilon,
\end{align*}
where the constants $c_1, c_2$ only depend on $L,B,T-t_0$ and the dimension $d$. In particular, the constants are independent of singular values of matricizations of the exact or approximate solution tensor.
\end{theorem}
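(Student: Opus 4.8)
The plan is to reduce the tensor estimate to the matrix robustness result, Theorem~\ref{thm:robustMatrix}, by exploiting the nested structure of the integrator. As in the exactness proof of Section~\ref{exactness}, one step of Algorithm~\ref{algntucker} is nothing but the composition, over the modes $i=1,\dots,d$, of single steps of the matrix projector-splitting integrator applied to the unfolded subproblems~\eqref{Yi}, with modified right-hand side
\[
 G_i(t,\Y_{[i]}) = \mat_i\Bigl(F\bigl(t,\ten_i(\Y_{[i]})\underset{k=1}{\overset{i-1}{\bigtimes}}\U_k^1\bigr)\underset{k=1}{\overset{i-1}{\bigtimes}}\U_k^{1,T}\Bigr).
\]
The first thing I would check is that each $G_i$ inherits the hypotheses of Theorem~\ref{thm:robustMatrix}. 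Since the factors $\U_k^1$ have orthonormal columns, the multilinear products with $\U_k^1$ and $\U_k^{1,T}$ are non-expansive, so the Lipschitz constant $L$ and the bound $B$ from~\eqref{Lip} carry over to $G_i$ unchanged; likewise, the decomposition $F=M_i+R_i$ of~\eqref{MR} transports to a tangential-plus-$\varepsilon$ decomposition of $G_i$ on the matrix manifold $\M_i$, because $M_i\in\T_Y\M_i$ is mapped into the tangent space of the reduced manifold while $\|R_i\|\le\varepsilon$ is preserved. Thus each mode-$i$ substep is an instance to which the matrix theory applies with the \emph{same} constants.

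Next I would carry out a one-step (local) error analysis. Starting a single step from $A(t_0)\in\M$, I would compare the output $Y^1$ with $A(t_1)$, using the exactness result of Section~\ref{exactness} as the backbone: were $F$ exactly tangential to each $\M_i$ (i.e.\ $\varepsilon=0$) along a rank-$(r_1,\dots,r_d)$ solution, the integrator would reproduce $A(t_1)$ exactly. The deviation from this ideal situation is controlled by the defects $R_i$ and by the usual $\bigO(h^2)$ splitting and quadrature remainders, giving a local error of the form $\bigO(h^2)+\bigO(h\varepsilon)$. Crucially---and this is the whole point of using the projector-splitting scheme---the exactness property lets one avoid the orthogonal projection $P(Y)$, whose Lipschitz constant grows like the inverse of the smallest singular value of the matricizations; consequently the local-error constant depends only on $L$, $B$ and $d$, but not on the singular values. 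Summing the mode-wise contributions accounts for the explicit dependence of the constants on the dimension $d$.

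I would then establish stability of the step map: two runs of Algorithm~\ref{algntucker} from nearby initial tensors remain $(1+\bigO(h))$-close, which follows from the Lipschitz bound~\eqref{Lip}, the non-expansiveness of the multilinear products, and the fact that the QR factorizations preserve the relevant products $\U\widehat{\S}=\K$. Combining the local error bound with this stability estimate in a standard error-accumulation (Lady Windermere's fan) argument over $n$ steps telescopes the $\bigO(h^2)+\bigO(h\varepsilon)$ local errors into the asserted global bound $\|Y_n-A(t_n)\|\le c_1h+c_2\varepsilon$, with $c_1,c_2$ depending only on $L$, $B$, $T-t_0$ and $d$.

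I expect the main obstacle to be the mode coupling inside the local error analysis. The substep for mode $i$ is driven by $G_i$, which depends on the already-updated bases $\U_1^1,\dots,\U_{i-1}^1$, and its initial value is produced by the preceding S-steps; one must therefore verify that the reference solution against which each matrix step is compared is the correct restriction $\mat_i(A(t)\bigtimes_{k=1}^{i-1}\U_k^{1,T})$ (as identified in Section~\ref{exactness}) and track how the $\varepsilon$-defect created at mode $i$ propagates into the initial data and vector field of mode $i+1$. Making this propagation uniform in the singular values---so that the accumulated constant is genuinely $d$-dependent but singular-value independent---is the delicate part, and it is exactly here that the exactness result must be invoked mode by mode rather than the naive Lipschitz estimate for $P(Y)$.
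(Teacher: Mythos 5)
Your overall reduction---verify the hypotheses of Theorem~\ref{thm:robustMatrix} for each unfolded subproblem, obtain a local error of size $\bigO(h\varepsilon+h^2)$, and accumulate over $n$ steps---matches the paper in outline, but your accumulation mechanism contains a genuine gap. You propose to prove that the step map of Algorithm~\ref{algntucker} is $(1+\bigO(h))$-Lipschitz in its initial tensor, uniformly in the singular values, citing non-expansiveness of the multilinear products and the QR identities. That cannot work as stated: one step depends on the coranges $\V_i^0$ and QR factors extracted from the initial data, and singular subspaces of a matricization with small singular values are \emph{not} Lipschitz functions of the tensor with a uniform constant (Wedin-type perturbation bounds carry the reciprocal of a singular-value gap), so two $\bigO(\delta)$-close initial tensors can generate substep differential equations whose projected right-hand sides differ by $\bigO(1)$. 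A singular-value-independent stability bound for the numerical map is precisely what a robust analysis must \emph{avoid} needing, and the paper never proves or uses one: it does not redo the accumulation at all, but invokes the matrix result with \emph{inexact substeps}, i.e.\ the bound \eqref{err-inexact} from \cite{KLW16}, in which local defects are transported by the exact flow of \eqref{ode} (Gronwall with the Lipschitz constant $L$ of $F$). If you insist on a Lady Windermere's fan of your own, the fan must consist of exact solutions passing through the numerical values, not of numerical trajectories.

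Relatedly, the mode coupling that you correctly flag as ``the delicate part'' is left unresolved in your proposal, and it is exactly where the paper's proof does its work. The paper lifts the mode-$i$ subproblem back to full size by setting $Z_i(t)=Y_i(t)\bigtimes_{k=1}^{i-1}\U_k^1$, which satisfies \eqref{odeZi} with $Z_i(t_0)\in\M_i$ because $\L_{i-1}^{0,T}$ has full rank, and then runs a \emph{backward} induction $i=d,\dots,1$ on the local bound \eqref{locerr}, $\|Z_i^1-Z_i(t_1)\|=\bigO(h(\varepsilon+h))$: for $i=d$ all three substeps are solved exactly and Theorem~\ref{thm:robustMatrix} applies; for $i<d$ the L-step is inexact with defect equal to the mode-$(i+1)$ local error, and \eqref{err-inexact} closes the induction; at $i=1$ the same inexact-substep bound, applied over $n$ steps, already \emph{is} the global estimate, so no separate stability argument remains. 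Finally, your transfer of assumption \eqref{MR} to the reduced problems is asserted too quickly: multiplying the tangential part $M_i(t,Y)$ by $\bigtimes_{k=1}^{i-1}(\U_k^1\U_k^{1,T})$ lands in $\T_Y\M_i$ only because the relevant $Y=Z_i(t)$ is itself invariant, $Y=Y\bigtimes_{k=1}^{i-1}(\U_k^1\U_k^{1,T})$; the paper proves this via the SVD of the $i$-mode unfolding, and without that invariance the claim is false in general.
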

\begin{proof}
Recall from~\eqref{odeYi} and~\eqref{Li} for the derivation of the nested Tucker integrator, that Alg.~\ref{alg:nestedKSL} solves approximately the following subproblem for each mode $i$ on $\R^{r_1 \times \cdots \times r_{i-1} \times n_i \times \cdots \times n_d}$:
\begin{align*}
 \dt{Y}_{i}(t) = F\Bigl(t,Y_{i}(t) \underset{k=1}{\overset{i-1}{\bigtimes}} \U_k^1 \Bigr)  \underset{k=1}{\overset{i-1}{\bigtimes}} \U_k^{1,T}, \qquad Y_{i}(t_0) = \ten_{i-1}(\L_{i-1}^{0,T})
 \end{align*}
 with $\L_{i-1}^{0,T} = \widetilde{\S}_{i-1}^0\V_{i-1}^{0,T}$. Introducing 
 $$Z_i(t) = Y_i(t) \underset{k=1}{\overset{i-1}{\bigtimes}} \U_k^1 \ \in \M_1 \cap \dots \cap \M_{i-1} \subset \R^{n_1 \times \cdots \times n_d},$$ we obtain the equivalent initial value problem on $\R^{n_1 \times \cdots \times n_d}$
\begin{align}\label{odeZi}
\dt{Z}_{i}(t) = F\bigl(t,Z_{i}(t)\bigr) \underset{k=1}{\overset{i-1}{\bigtimes}} (\U_k^1\U_k^{1,T}), \qquad
 Z_{i}(t_0) = \ten_{i-1}(\L_{i-1}^{0,T})\underset{k=1}{\overset{i-1}{\bigtimes}} \U_k^1.
 \end{align}
 We note that since $\L_{i-1}^{0,T}$ has full rank, we have $Z_i(t_0) \in \M_i$. Alg.~\ref{alg:nestedKSL} now applies the matrix projector-splitting integrator with inexact integration in the third substep to the $i$-mode unfolded differential equation~\eqref{odeZi}. This results in the approximation $Z_i^1 \in \M_i$ to $Z_i(t_1)$. 
 
 We show by induction for $i = d, \dots, 1$ the local error bound 
\begin{align}\label{locerr}
 \| Z_i^1 - Z_i(t_1) \| = \bigO(h(\varepsilon + h)),
\end{align}
where the constants symbolized by the $\bigO$ notation depend only on $L,B$ and $d$. For $i=d$, the approximation is obtained by the matrix projector-splitting algorithm with exact solution of all three substeps. We verify that the conditions~(a--c) of Thm.~\ref{thm:robustMatrix} applied to~\eqref{odeZi} are satisfied: 
Assumption~(a) is trivially satisfied by~\eqref{Lip} since $\U_k^1\U_k^{1,T}$ is an orthogonal projector. Using~\eqref{MR}, the $d$-mode unfolding of the right hand side of \eqref{odeZi} can be decomposed, for $Y \in \M_d$, as 
\begin{align*}
 \mat_d \Bigl(F(t,Y) \underset{k=1}{\overset{d-1}{\bigtimes}} (\U_k^1\U_k^{1,T})\Bigr) =& \mat_d \Bigl(M_d(t,Y) \underset{k=1}{\overset{d-1}{\bigtimes}} (\U_k^1\U_k^{1,T})\Bigr) \\ 
 &+ \mat_d \Bigl(R_d(t,Y) \underset{k=1}{\overset{d-1}{\bigtimes}} (\U_k^1\U_k^{1,T})\Bigr), 
\end{align*}
where $M_d(t,Y) \in \T_Y\M_d$ and $\| R_d(t,Y) \| \leq \varepsilon$. 

We note that if $M_d(t,Y) \in \T_Y\M_d$ and $Y=Y \bigtimes_{k=1}^{d-1} (\U_k^1\U_k^{1,T})$ (as is the case for $Y=Z_i(t)$ in \eqref{odeZi}), then we also have $M_d(t,Y) \bigtimes_{k=1}^{d-1} (\U_k^1\U_k^{1,T}) \in \T_Y\M_d$. 
This holds true because if we consider the singular value decomposition of $\mat_d(Y) =\U\S\V^T$, then $\V^T=\V^T \bigotimes_{k=1}^{d-1} \U_k^1\U_k^{1,T}$. Now, $M \in \T_Y\M_d$ means that $\mat_d(M)=\delta\!\U\S\V^T + \U\delta\S\V^T + \U\S\,\delta\!\V^T$ for some suitable $\delta\!\U,\delta\S,\delta\!\V$. But then, since  $\V^T=\V^T \bigotimes_{k=1}^{d-1} \U_k^1\U_k^{1,T}$, this implies that
$\mat_d\bigl(M \bigtimes_{k=1}^{d-1} (\U_k^1\U_k^{1,T})\bigr) $ is of the same form with a modifed $\delta\!\V$, and hence $M \bigtimes_{k=1}^{d-1} (\U_k^1\U_k^{1,T})\in \T_Y\M_d$.

By definition, $\mat_d(\M_d) = \{\mat_d(Y) \, \colon \, Y \in \M_d\}$ is the manifold of matrices of rank $r_d$ of dimension $(n_d \times n_1 \cdots n_{d-1})$. Moreover, for $Y \in \M_d$ and $\Y = \mat_d(Y)$, we have $\T_{\Y}\mat_d(\M_d) = \mat_d(\T_Y\M_d)$. We conclude that 
\begin{align*}
 \mat_d\Bigl(M_d(t,Y) \underset{k=1}{\overset{d-1}{\bigtimes}} (\U_k^1\U_k^{1,T})\Bigr) \in \T_{\Y}\mat_d(\M_d)
\end{align*}
and the corresponding term with $R_d$ is still bounded by $\varepsilon$ thanks to~\eqref{MR}. Hence, assumption~(b) is verified. Since assumption~(c) was shown above, we are now in the situation to apply Thm.~\ref{thm:robustMatrix}, which yields \eqref{locerr} for $i=d$. 

We proceed similarly for $i=d-1$ down to $1$. In these cases, we apply the matrix projector-splitting algorithm to the $i$th unfolding with an inexact solution of the third substep. The error of this inexact solution is given by \eqref{locerr} for $i+1$. In the same way as before, the conditions of Thm.~\ref{thm:robustMatrix} are verified for the rank $r_i$ matrix manifold $\mat_i(\M_i)$. With the induction hypothesis that \eqref{locerr} holds for $i+1, \dots, d$, we conclude from
the error bound \eqref{err-inexact} (for
the situation of inexact solutions in the substeps) that \eqref{locerr} also holds for $i$.  

For $i=1$, this gives the stated error bound.
\end{proof}

\section{Equivalence with the tensor projector-splitting integrator of \cite{L15}}
The nested Tucker integrator presented in the previous section goes through each mode and reduces the dimension of the current mode before updating the basis matrix $\U_i(t)$ and the matrix $\S_i(t)$ for all $i = 1, \dots, d$. 

In contrast to this, the time integrator described in~\cite{L15} does not reduce the dimension in each mode, since the corange of the current unfolded approximation tensor takes the basis matrices after performing one time step. This step of the integrator and the full algorithm can be retraced in the following algorithm: \\
\newline
\begin{algorithm}[H]\label{algLubichtucker}
\caption{One time step of the Tucker integrator}
    \label{algtucker}
        \KwData{Tucker tensor $Y^0 = C^0 \bigtimes_{i=1}^d \U_i^0$, $F(t,Y)$, $t_0$, $t_1$}
        \setcounter{AlgoLine}{0}
        \KwResult{Tucker tensor $Y^1 = C^1 \bigtimes_{i=1}^d \U_i^1$}
    \Begin{
           \For {$i=1$ \KwTo $d$}
           {
             compute QR factorization $\mat_i(C^0)^T = \Q_i^0 \S_i^{0,T}$ 
            
             set $\V_i^{0,T} = \mat_i \Bigl(\ten_i(\Q_i^{0,T}) \underset{k=1}{\overset{i-1}{\bigtimes}} \U_k^{1,T} \underset{l=i+1}{\overset{d}{\bigtimes}} \U_l^{0,T}\Bigr)$ \label{alg:line:LubichTucker-Vi}
            
             set $\K_i^0 = \U_i^0 \S_i^0$
             
             set $\Y_{[i]}^+(t) = \K_i(t)\V_i^{0,T}$
            
             solve $\dt{\K}_i(t) = \mat_i\bigl(F(t, \ten_i(\Y_{[i]}^+))\bigr) \V_i^0$, \\ 
             \ with initial value $\K_i(t_0) = \K_i^0$ and return $\K_i^1 = \K_i(t_1)$ \label{alg:line:LubichTucker-dK}
            
             compute QR factorization $\K_i^1 = \U_i^1 \widehat{\S}_i^1$
            
             set $\S_i^0 = \widehat{\S}_i^1$
             
             set $\Y_{[i]}^-(t) = \U_i^1\S_i(t)\V_i^{0,T}$
            
             solve $\dt{\S}_i(t) = -\U_i^{1,T} \mat_i\bigl(F(t, \ten_i(\Y_{[i]}^-))\bigr) \V_i^0$, \\
             \ with initial value $\S_i(t_0) = \S_i^0$ and return $\S_i^1 = \S_i(t_1)$
            
             set $C^0 = \ten_i(\S_i^1 \Q_i^{0,T})$
           }
        
           solve $\dt{C}(t) = F\bigl(t, C(t) \underset{i=1}{\overset{d}{\bigtimes}} \U_i^1\bigr)\underset{i=1}{\overset{d}{\bigtimes}} \U_i^{1,T}$, \\
           \ with initial value $C(t_0) = C^0$ and return $C^1 = C(t_1)$
        
           set $Y^1 = C^1 \underset{i=1}{\overset{d}{\bigtimes}} \U_i^1$ 
         }
\end{algorithm}
\vspace{10pt}
Comparing Alg.~\ref{algLubichtucker} for the Tucker integrator with Alg.~\ref{alg:nestedKSL} for the nested Tucker integrator, we see obvious similarities in the structure of those two methods. However, the two algorithms solve different matrix differential equations for $\K_i(t)$ and $\S_i(t)$---clearly they have different definitions but they also differ in size. In addition, one algorithm integrates $C(t)$ whereas the other $\L(t)$. 
%
This distinction of equations is due to different definitions for the coranges of the unfoldings of the approximation tensor $Y$. Nevertheless, we will see in the following that both integration methods are equivalent. 
\begin{theorem}
 The nested Tucker integrator presented in Section~\ref{NTucker} (Alg.~\ref{algntucker}) and the Tucker integrator described in~\cite{L15} (Alg.~\ref{algLubichtucker}) applied on the tensor differential equation \eqref{ode} are equivalent in the sense that they yield the same low-rank approximation $Y^1$ after one time step. 
\end{theorem}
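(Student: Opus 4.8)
The plan is to show that, once the different conventions for the corange matrices are unwound, the two algorithms solve exactly the same sequence of substep differential equations with the same initial data, so that they produce identical iterates and hence the same $Y^1$. The only structural difference between Alg.~\ref{algntucker} and Alg.~\ref{algtucker} lies in line~\ref{alg:line:V_i} versus line~\ref{alg:line:LubichTucker-Vi}: in the nested integrator the first $i-1$ modes are already compressed, so its corange factor lives in $\R^{r_1\cdots r_{i-1}n_{i+1}\cdots n_d\times r_i}$, whereas in the integrator of~\cite{L15} those modes are kept in full, so its corange factor lives in $\R^{n_1\cdots n_{i-1}n_{i+1}\cdots n_d\times r_i}$. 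Writing $\V_i^{0,(1)}$ and $\V_i^{0,(2)}$ for the two versions (with $(1)$ for Alg.~\ref{algntucker} and $(2)$ for Alg.~\ref{algtucker}), the first thing I would record, directly from the two defining formulas together with $\U_k^{1,T}\U_k^1=\I_{r_k}$, is the identity
\[
\V_i^{0,(2)} = \Bigl(\Bigotimes_{k=1}^{i-1}\U_k^1\otimes\I\Bigr)\,\V_i^{0,(1)},
\]
which in particular shows that $\V_i^{0,(2)}$ inherits orthonormal columns from $\V_i^{0,(1)}$.

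Next I would run an induction over the loop index $i=1,\dots,d$, with the hypothesis that at the start of iteration $i$ the two algorithms hold the same core tensor $C^0$ and the same previously computed basis matrices $\U_1^1,\dots,\U_{i-1}^1$. The QR factorization of $\mat_i(C^0)^T$ is then identical, yielding the same $\Q_i^0$, $\S_i^0$ and the same $\K_i^0=\U_i^0\S_i^0$. The crux is to match the $\K$- and $\S$-step equations. Using the matricization rule $\mat_i(Z\times_k A)=\mat_i(Z)\,(\cdots\otimes A^T\otimes\cdots)$ together with the identity above, the argument of $F$ in the nested scheme, $\ten_i(\K_i\V_i^{0,(1),T})\bigtimes_{k=1}^{i-1}\U_k^1$, coincides with the argument $\ten_i(\K_i\V_i^{0,(2),T})$ in~\cite{L15}; dually, right-multiplying $\mat_i(F(\cdot))$ by $\V_i^{0,(2)}$ reproduces exactly the combination of the projection $\bigtimes_{k=1}^{i-1}\U_k^{1,T}$ and the multiplication by $\V_i^{0,(1)}$ used in the nested scheme. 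Hence both $\K$-step ODEs, and then both $\S$-step ODEs, have the same right-hand side and the same initial value, so they produce the same $\U_i^1$, $\widehat\S_i^1$ and $\widetilde\S_i^0=\S_i^1$, and thus the same updated core $C^0=\ten_i(\widetilde\S_i^0\Q_i^{0,T})$, closing the induction.

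After the loop the two algorithms therefore agree on all $\U_i^1$ and on the intermediate core $C^0$, and only the last substep remains to be reconciled: the nested integrator advances $\L^T=\mat_d(C)$ by the $d$-mode L-step \eqref{LC}, while the integrator of~\cite{L15} advances the core $C$ directly. I would retensorize the L-step using $\ten_d(\U_d^1\mat_d(C))=C\times_d\U_d^1$ and $\U_d^{1,T}\mat_d(\cdot)=\mat_d\bigl((\cdot)\times_d\U_d^{1,T}\bigr)$, absorbing the factor $\times_d\U_d^{1,T}$ together with the existing $\bigtimes_{k=1}^{d-1}\U_k^{1,T}$. Applying $\ten_d$ then gives precisely $\dt{C}(t)=F\bigl(t,C(t)\bigtimes_{k=1}^d\U_k^1\bigr)\bigtimes_{k=1}^d\U_k^{1,T}$ with $C(t_0)=C^0$, i.e. the C-step of Alg.~\ref{algtucker}. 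Thus the final cores $C^1$ coincide as well, and $Y^1=C^1\bigtimes_{i=1}^d\U_i^1$ is the same for both integrators.

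I expect the only genuine difficulty to be the bookkeeping in the induction step: one must verify that the explicit lift-and-project operations $\bigtimes_{k=1}^{i-1}\U_k^1$ and $\bigtimes_{k=1}^{i-1}\U_k^{1,T}$ of the nested integrator are exactly reproduced by the enlarged corange factor $\V_i^{0,(2)}$ of~\cite{L15}, which hinges on the matricization identities and on the orthonormality relations $\U_k^{1,T}\U_k^1=\I$. Once this is in place, the two substep problems are literally the same differential equation with the same initial data, so any common time discretization — in particular the exact solution in the explicitly given case — yields identical iterates; the remaining manipulations are routine substitutions.
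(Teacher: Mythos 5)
Your proposal is correct and follows essentially the same route as the paper's proof: both reduce the claim to matching the $\K$- and $\S$-step differential equations (by relating the two corange matrices via $\overline{\V}_i^{0}=\bigl(\Bigotimes_{k=1}^{i-1}\U_k^1\otimes\I\bigr)\V_i^{0}$ and the matricization identities) and then retensorizing the final $d$-mode L-step to recover the core equation $\dt{C}(t)=F\bigl(t,C(t)\bigtimes_{i=1}^d\U_i^1\bigr)\bigtimes_{i=1}^d\U_i^{1,T}$ of Alg.~\ref{algtucker}. The only differences are presentational --- you isolate the corange relation as an explicit lemma and organize the mode-by-mode agreement as a formal induction, whereas the paper performs the same substitutions directly --- so nothing further is needed.
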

\begin{proof}
It is sufficient to show equivalence of the matrix differential equations that appear in Alg.~\ref{algntucker} and~\ref{algtucker}. In order to distinguish between the factors computed by those two methods, we will denote those for Alg.~\ref{algtucker} using $\overline{\ \cdot\ }$, e.g., $\overline{\V}_i$, $\overline{\K}_i(t)$, and $\overline{\S}_i(t)$. The notation for Alg.~\ref{algntucker} is left unchanged.


We start with Alg.~\ref{algtucker}. Writing line~\ref{alg:line:LubichTucker-Vi} as
\begin{align*}
 \overline{\V}_i^{0,T} 
 &=\Q_{i}^{0,T}  \Bigl(\Bigotimes_{k=1}^{i-1} \U_{k}^{1,T} \otimes\Bigotimes_{k=i+1}^d \U_k^{0,T} \Bigr),
\end{align*}
the equation of motion of $\K_i$ becomes
\begin{align*}
\dt{\overline{\K}}_i(t) &= \mat_i\Bigl(F\bigl(t,\ten_i(\overline{\K}_i(t)\overline{\V}_i^{0,T})\bigr)\Bigr) \overline{\V}_i^0 \\
&= \mat_i\Bigl(F\Bigl(t,\ten_i\Bigl(\overline{\K}_i(t)  \Q_{i}^{0,T}  \Bigl(\Bigotimes_{k=1}^{i-1} \U_{k}^{1,T} \otimes \Bigotimes_{k=i+1}^d \U_k^{0,T} \Bigr)  \Bigr)\Bigr) \overline{\V}_i^0.
\end{align*}
For Alg.~\ref{algntucker}, on the other hand, that equation  reads
\[
\dt{\K}_i(t) = \mat_i\Bigl(F\Bigl(t,\ten_i(\K_i(t) \V_i^{0,T}) \underset{k=1}{\overset{i-1}{\bigtimes}} \U_k^1 \Bigr) \underset{k=1}{\overset{i-1}{\bigtimes}} \U_k^{1,T} \Bigr) \V_i^{0}.
\]
We first expand the argument of $F$ in this ODE. Writing line~\ref{alg:line:V_i} in Alg.~\ref{algntucker} as
\begin{align*}
 \V_i^{0,T} = 
 \Q_{i}^{0,T}  \Bigl(\Bigotimes_{k=1}^{i-1} \I_{r_k} \otimes\Bigotimes_{k=i+1}^d \U_k^{0,T} \Bigr)
\end{align*}
and substituting, we obtain
\begin{align*}
\ten_i(\K_i(t) \V_i^{0,T}) \underset{k=1}{\overset{i-1}{\bigtimes}} \U_k^1 &=
\ten_i(\K_i(t) \Q_{i}^{0,T} ) \underset{k=i+1}{\overset{d}{\bigtimes}} \U_k^{0} \underset{k=1}{\overset{i-1}{\bigtimes}} \U_k^1 \\
&= \ten_i\Bigl(\K_i(t) \Q_{i}^{0,T}  \Bigl( \Bigotimes_{k=i+1}^d \U_k^{0,T} \otimes\Bigotimes_{k=1}^{i-1} \U_{k}^{1,T} \Bigr)  \Bigr).
\end{align*}
Hence, we see that $F$ has the same arguments in both algorithms for the K-step. Omitting it, we continue with
\begin{align*}
\dt{\K}_i(t) &= \mat_i\Bigl(F(t, \cdot ) \underset{k=1}{\overset{i-1}{\bigtimes}} \U_k^{1,T} \Bigr) \V_i^{0} \\
&=\mat_i\Bigl(F(t, \cdot ) \Bigr)  \Bigl( \underset{k=1}{\overset{i-1}{\Bigotimes}} \U_k^{1,T} \otimes\Bigotimes_{k=i+1}^{d} \I_{r_k}  \Bigr) \Bigl(\Bigotimes_{k=1}^{i-1} \I_{r_k} \otimes\Bigotimes_{k=i+1}^d \U_k^{0} \Bigr) \Q_{i}^{0}  \\
&=\mat_i\Bigl(F(t, \cdot ) \Bigr)  \Bigl( \underset{k=1}{\overset{i-1}{\Bigotimes}} \U_k^{1,T}  \otimes\Bigotimes_{k=i+1}^d \U_k^{0} \Bigr) \Q_{i}^{0}.  
\end{align*}
Comparing with $\overline{\V}_i^{0}$ above, we see that the differential equations for $\K_i(t)$ and $\overline{\K}_i(t)$ are indeed equivalent.

Hence, applying the same numerical method to both of them would give the same result $\K_i^1 = \overline{\K}_i^1$. 

The equivalence of the evolution equations for $\dt{\S}_i(t)$ and $\dt{\overline{\S}}_i(t)$ can be shown in the same way as above. 
This again  gives the same numerical solutions after one time step, i.e., $\S_i^1 = \overline{\S}_i^1$. 

Finally, for the core tensor, we compare the evolution equation for for $C(t)$ in Alg.~\ref{algtucker},
\begin{align*}
 \dt{C}(t) = F\bigl(t, C(t) \underset{i=1}{\overset{d}{\bigtimes}} \U_i^1\bigr)\underset{i=1}{\overset{d}{\bigtimes}} \U_i^{1,T}, \qquad C(t_0) = C^0,
\end{align*}
with that of $\L(t)$ from Alg.~\ref{algntucker}. Rerensorizing the latter in the $d$th mode yields
\begin{align*}
 \ten_d(\dt{\L}^T(t)) &= \ten_d\Bigl(\U_d^{1,T} \mat_d\Bigl(F\Bigl(t,\ten_d(\U_d^1   L^T(t)) \underset{i=1}{\overset{d-1}{\bigtimes}} \U_i^1 \Bigr) \underset{i=1} {\overset{d-1}{\bigtimes}} \U_i^{1,T}\Bigr)\Bigr) \\
                       &= \ten_d\Bigl(\U_d^{1,T} \mat_d\Bigl(F\Bigl(t,\ten_d(\L^T(t)) \underset{i=1}{\overset{d}{\bigtimes}} \U_i^1 \Bigr) \underset{i=1}{\overset{d-1}{\bigtimes}} \U_i^{1,T}\Bigr)\Bigr) \\
                       &= F\bigl(t, \ten_d(\L^T(t)) \underset{i=1}{\overset{d}{\bigtimes}} \U_i^1 \bigr) \underset{i=1}{\overset{d}{\bigtimes}} \U_i^{1,T}.
\end{align*}
Identifying now $C(t)$ as $\ten_d({\L}^T(t))$, we see that differential equations are the same. Since the same holds true for the initial values,
\begin{align*}
 \ten_d(\L_d^{0,T}) = \ten_d(\mat_d(C^0)) = C^0,
\end{align*}
both algorithms deliver the same same low-rank approximation $Y^1$.
\end{proof}

\section{Numerical experiments}
We present two numerical examples to illustrate our theoretical results of the proposed Tucker integrator. We consider examples that are tensor variants of the examples in~\cite[Section 4]{KLW16} for the matrix case, in particular, a discrete nonlinear Schr\"odinger equation and an example of approximately adding tensors in the Tucker format. 

\subsection{A discrete nonlinear Schr\"odinger equation}
We model a dilute Bose--Einstein condensate, trapped in a periodic potential~\cite{ST01}, on a regular lattice of width $\gamma$. The dynamics of its phase diagram is governed by the discrete nonlinear Schr\"odinger equation
\begin{align}\label{DNLS}
\begin{split} 
i \dt{A}(t) &= -\frac{1}{2}L[A(t)] + \varepsilon |A(t)|^2 \odot A(t) \\
A_{jkl}(t_0) &= \exp\bigl(-1/\gamma^2((j-j_1)^2 - (k-k_1)^2 - (l-l_1)^2)\bigr) \\
&\quad + \exp\bigl(-1/\gamma^2((j-j_2)^2 - (k-k_2)^2 - (l-l_2)^2)\bigr), 
\end{split}
\end{align}
where $A(t) \in \R^{n_1 \times n_2 \times n_3}$ with $n_i = 100$ for all $i \in \{1, 2, 3\}$. The bounded linear operator $L\colon \R^{n_1 \times n_2 \times n_3} \to \R^{n_1 \times n_2 \times n_3}$ describes the interaction between the grid points centred at $(j,k,l)$ for all $j,k,l = 1, \dots, 100$. It is defined component-wise as 
\begin{align*}
L[A](j,k,l) =&\ A(j-1,k,l)+ A(j+1,k,l) + A(j,k-1,l)+A(j,k+1,l)
\\
&+A(j,k,l-1)+A(j,k,l+1),
\end{align*}
where terms with indices outside the range from 1 to 100 are interpreted as~$0$.
Compared to the more standard seven-point stencil for the discrete Laplace operator in three dimensions, the operator $L$ does not take the centred grid point into account. The entries of the tensor $|A|^2$ are the squares of the absolute values of the corresponding entries of $A$, and the $\odot$ in $|A|^2 \odot A$ desnotes the entrywise (Hadamard) product. The parameter $\varepsilon$ determines the degree of nonlinearity.

We consider two excitations of the system, which are located at grid-points $(j_1,k_1,l_1) = (75,25,1)$ and $(j_2,k_2,l_2) = (25,75,100)$. We take $\gamma = 10$. 

To compute a low-rank approximation $Y(t) \in \M$ with multilinear rank $r=(10,10,10)$ to the solution of the nonlinear differential equation \eqref{DNLS}, we apply our nested Tucker integrator (Alg.~\ref{alg:nestedKSL}) to~\eqref{DNLS}. The differential equations appearing in the substeps of each mode are solved by the $4$th order Runge--Kutta method with time step size $h = 10^{-3}$. This approximate solution is compared to a full rank reference solution, which is also computed by a $4$th order Runge--Kutta method, but with $h = 0.5 \cdot 10^{-3}$. In Table \ref{err-table} we show the error behavior for different parameters $\varepsilon$ and time step sizes $h$:

\begin{table}[htb]
	\centering
	\begin{tabular}{c|cccc}
		$\varepsilon$ \textbackslash\ $h$ & $1$ & $10^{-1}$ & $10^{-2}$ & $10^{-3}$ \\
		\hline
		$1$        & 4.59e-1  & 4.01e-2  & 3.88e-2  & 3.88e-2 \\
		$10^{-1}$  & 9.39e-2  & 9.68e-4  & 1.61e-4  & 1.47e-4 \\
		$10^{-2}$  & 9.27e-3  & 3.20e-5  & 2.19e-6  & 1.30e-6 \\
		$10^{-3}$  & 5.36e-4  & 3.18e-6  & 8.93e-8  & 3.54e-8 \\
		$10^{-4}$  & 5.12e-5  & 2.73e-7  & 3.23e-9  & 1.91e-9
	\end{tabular}
\caption{Error in Frobenius norm at $t=1$ of the rank-(10,10,10) Tucker integrator applied to \eqref{DNLS}. \label{err-table}}
\end{table}
For each time step size $h$, we see the error decaying with $\varepsilon$. This observation is due to the fact that the linear term $L[A(t)]$ in \eqref{DNLS} maps onto the tangent-space $\TM$ of the manifold $\M$ of multilinear rank. The nonlinear term is of full rank, but it is controlled by the factor $\varepsilon$. This makes the dependence of the error behaviour on $\varepsilon$ explicit. 
We also see in the first row, that the error stagnates from time step size $h=10^{-2}$ on and this shows the dominance of the perturbation factor $\varepsilon$. We would observe the same behaviour for smaller $\varepsilon$, but for smaller time step sizes. 

Finally, in the last row, where the influence of $\varepsilon$ is small, we observe convergence of the error in terms of the time step size $h$.

\subsection{Approximate addition of tensors}
Let $A \in \mathbb{C}^{n_1 \times \cdots \times n_d}$ be a tensor of multilinear rank $r=(r_1, \dots, r_d)$ and let $B \in \mathbb{C}^{n_1 \times \cdots \times n_d}$. We consider the addition of the two given tensors, which results in 
\begin{align}\label{diradd}
 C = A + B,
\end{align}
where $C$ typically is not of low rank. We aim to find an approximation tensor of multilinear rank $r$. Such a computation is for example required in optimization problems on low-rank manifolds, under the name of retractions, and need to be computed in each iterative step~\cite{AO14}. There,  the increment is typically a tangential tensor $B \in \mathcal{T}_A\mathcal{M}$, which after adding directly as in \eqref{diradd} yields a tensor $C$ of multilinear rank 2r. Afterwards, the result is projected onto $\mathcal{M}$ by an SVD-based rank-$r$ approximation in order to obtain an approximation tensor $D \in \M$. With this procedure,  we first leave the low-rank manifold  and then project back onto $\M$.

Instead, we propose to apply one time step of the nested Tucker integrator starting with $t_0=0$ and with time step size $h = 1$ in order to solve 
\begin{align*}
 \dt{Y}(t) = P(Y)B, \qquad Y(t_0) = A.
\end{align*}
This gives an approximate solution $Y^1 \in \M$ for the result of the direct addition \eqref{diradd}. Contrary to the standard approach, we never leave the low-rank manifold when applying the nested Tucker integrator.

For our numerical example, we initialise $A$ as a random Tucker tensor of size $100 \times 100 \times 100$ and multi-linear rank $r = (10,10,10)$. The increment $B$ is constructed to be a random tensor in the tangent space $\mathcal{T}_Y\mathcal{M}$. We compare the full rank addition \eqref{diradd} with the low-rank approximation $Y^1 \in \M$ obtained by the nested Tucker integrator. We also compare those results with the retracted rank-$2r$ result, for which we perform a best rank-$r$ approximation. The figure below illustrates those comparisons: 

\begin{figure}[h!]
	\centering
	\includegraphics[trim = 7cm 9cm 7cm 9cm, width=0.25\textwidth]{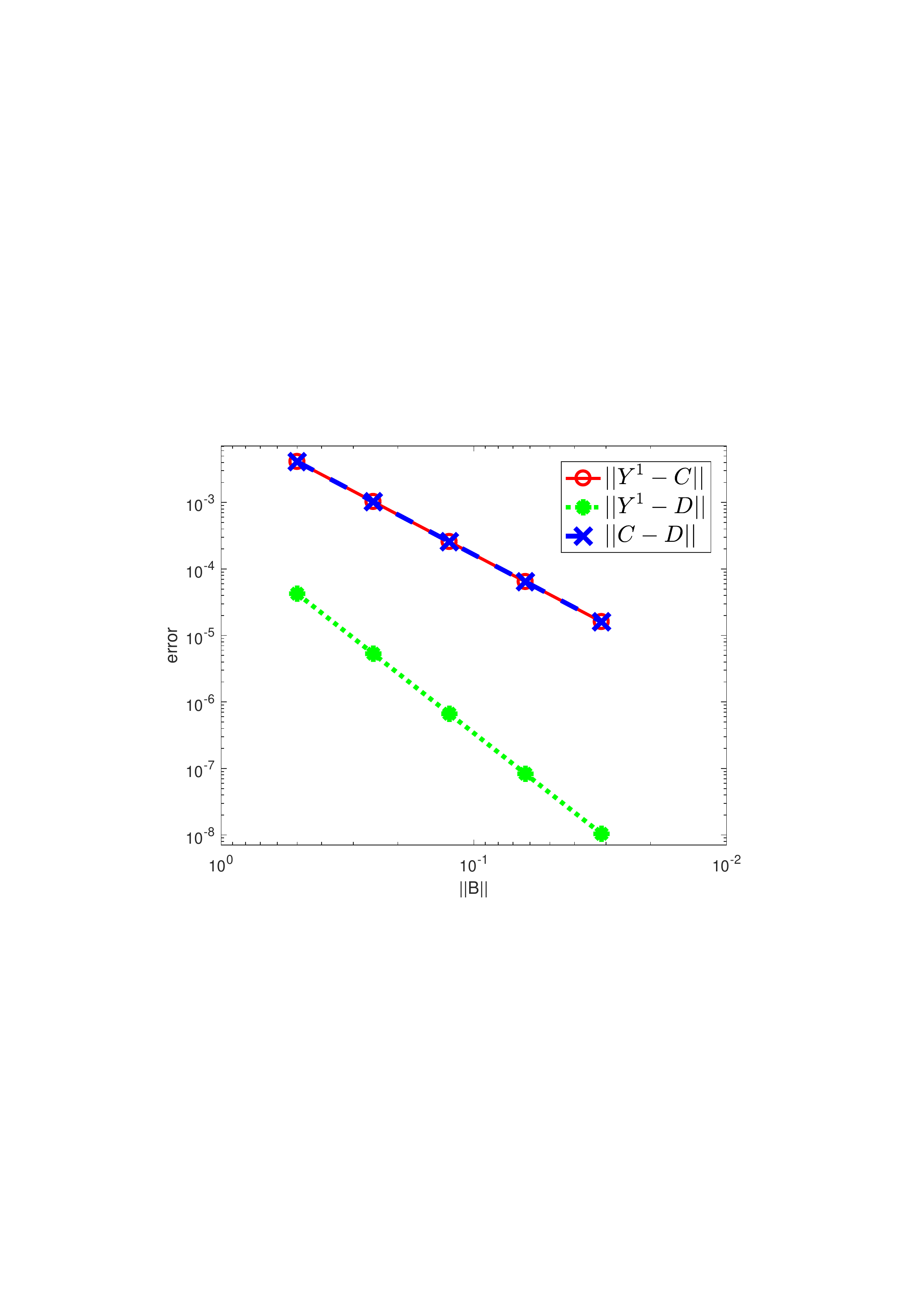}
	\caption{Errors for tensor addition for tangential increments $B$ of decreasing norm.}
	\label{fig:addition}
\end{figure}
We observe that the errors decrease with decreasing norm of the increment tensor $B$. We also see that the difference between the errors of the splitting integrator and the projected direct addition is marginal --- or rather we do not see it because the error curves of both approaches are not distinguishable in the figure.

\section*{Acknowledgements}
This work was supported by a grant from DFG through the GRK 1838. We thank Balázs Kovács for helpful discussions about numerical examples.


\begin{thebibliography}{10}
	
	\bibitem{AO14}
	P.-A. Absil and I.~V. Oseledets.
	\newblock Low-rank retractions: a survey and new results.
	\newblock {\em Comput. Optim. Appl.}, 62, 2014.
	
	\bibitem{LMV00}
	L.~De~Lathauwer, B.~De~Moor, and J.~Vandewalle.
	\newblock A multilinear singular value decomposition.
	\newblock {\em SIAM journal on Matrix Analysis and Applications},
	21:1253--1278, 2000.
	
	\bibitem{Hackbusch:2012}
	W.~Hackbusch.
	\newblock {\em Tensor Spaces and Numerical Tensor Calculus}.
	\newblock Springer, 2012.
	
	\bibitem{HLOVV16}
	J.~Haegeman, C.~Lubich, I.~Oseledets, B.~Vandereycken, and F.~Verstraete.
	\newblock Unifying time evolution and optimization with matrix product states.
	\newblock {\em Physical Review B}, 94, 2016.
	
	\bibitem{KLW16}
	E.~Kieri, C.~Lubich, and H.~Walach.
	\newblock Discretized dynamical low-rank approximation in the presence of small
	singular values.
	\newblock {\em SIAM J. Numer. Anal.}, 54:1020--1038, 2016.
	
	\bibitem{Kieri_V:2017}
	E.~Kieri and B.~Vandereycken.
	\newblock Projection methods for dynamical low-rank approximation of
	high-dimensional problems.
	\newblock Tech. report (submitted), 2017.
	
	\bibitem{KBL17}
	B.~Kloss, I.~Burghardt, and C.~Lubich.
	\newblock Implementation of a novel projector-splitting integrator for the
	multi-configurational time-dependent hartree approach.
	\newblock {\em The Journal of Chemical Physics}, 146, 2017.
	
	\bibitem{KL10}
	O.~Koch and C.~Lubich.
	\newblock Dynamical tensor approximation.
	\newblock {\em SIAM J. Matrix Anal. Appl.}, 31:2360--2375, 2010.
	
	\bibitem{KB09}
	T.~G. Kolda and B.~W. Bader.
	\newblock Tensor decompositions and applications.
	\newblock {\em SIAM review}, 51:455--500, 2009.
	
	\bibitem{Kressner:2015b}
	D.~Kressner, R.~Kumar, F.~Nobile, and C.~Tobler.
	\newblock Low-rank tensor approximation for high-order correlation functions of
	{G}aussian random fields.
	\newblock {\em SIAM/ASA J. Uncertain. Quantif.}, 3(1):393--416, 2015.
	
	\bibitem{L15}
	C.~Lubich.
	\newblock Time integration in the multiconfiguration time-dependent hartree
	method of molecular quantum dynamics.
	\newblock {\em Applied Mathematics Research eXpress}, 2015:311--328, 2015.
	
	\bibitem{LO14}
	C.~Lubich and I.~V. Oseledets.
	\newblock A projector-splitting integrator for dynamical low-rank
	approximation.
	\newblock {\em BIT}, 54:171--188, 2014.
	
	\bibitem{LOV15}
	C.~Lubich, I.~V. Oseledets, and B.~Vandereycken.
	\newblock Time integration of tensor trains.
	\newblock {\em SIAM Journal on Numerical Analysis}, 53:917--941, 2015.
	
	\bibitem{MGW09}
	H.-D. Meyer, F.~Gatti, and G.~A. Worth.
	\newblock {\em Multidimensional quantum dynamics}.
	\newblock John Wiley \& Sons, 2009.
	
	\bibitem{ST01}
	A.~Trombettoni and A.~Smerzi.
	\newblock Discrete solitons and breathers with dilute {B}ose--{E}instein
	condensates.
	\newblock {\em Phys. Rev. Lett.}, 86:2353--2356, 2001.
	
\end{thebibliography}

\end{document}